\theoremstyle{definition}
\theoremstyle{plain} \newtheorem{theorem}{Theorem}[section]
\theoremstyle{plain} \newtheorem{lemma}[theorem]{Lemma}
\theoremstyle{plain} \newtheorem{proposition}[theorem]{Proposition}
\theoremstyle{plain} 
\theoremstyle{plain} \newtheorem{corollary}[theorem]{Corollary}
\theoremstyle{remark} \newtheorem*{remark}{Remark}
\theoremstyle{definition} 
\theoremstyle{definition} \newtheorem*{definition*}{Definition}
\theoremstyle{definition} \newtheorem{question}[theorem]{Question}
\theoremstyle{remark} 
\makeatletter \renewenvironment{proof}[1][\proofname]
{\par\pushQED{\qed}\normalfont\topsep6\p@\@plus6\p@\relax\trivlist\item[\hskip\labelsep\bfseries#1\@addpunct{.}]\ignorespaces}{\popQED\endtrivlist\@endpefalse}
\newcommand{\EE}{\mathbb{E}}
\newcommand{\PP}{\mathbb{P}}
\newcommand{\RR}{\mathbb{R}}
\newcommand{\QQ}{\mathbb{Q}}
\newcommand{\NN}{\mathbb{N}}
\newcommand{\TT}{\mathbb{T}}
\newcommand{\ZZ}{\mathbb{Z}}
\newcommand{\calA}{\mathcal{A}}
\newcommand{\calF}{\mathcal{F}}
\newcommand{\bone}{\mathbf{1}}
\renewcommand{\leq}{\leqslant} \renewcommand{\geq}{\geqslant}
\DeclarePairedDelimiter{\abs}{\lvert}{\rvert}
\DeclarePairedDelimiter{\norm}{\lVert}{\rVert}
\DeclarePairedDelimiter{\floor}{\lfloor}{\rfloor}
\DeclarePairedDelimiter{\set}{\lbrace}{\rbrace}
\DeclarePairedDelimiter{\parens}{\lparen}{\rparen}
\DeclarePairedDelimiter{\brackets}{\lbrack}{\rbrack}
\DeclareMathOperator{\Leb}{Leb}
\def\eps{{\varepsilon}}
\def\1int{{[0,1]}}
\title{Remarks about inhomogeneous pair correlations}
\author{Felipe A.~Ram{\'i}rez\footnote{\texttt{framirez@wesleyan.edu}}
  \\ Wesleyan University}
\date{}
\begin{document}

\maketitle


{\centering\footnotesize \emph{For Jorge A.~Ram{\'i}rez (1954--2020)---\\with
    Love and Gratitude}\par}

\begin{abstract}
  Given an infinite subset $\calA \subseteq\NN$, let $A$ denote its
  smallest $N$ elements. There is a rich and growing literature on the
  question of whether for typical $\alpha\in[0,1]$, the pair
  correlations of the set $\alpha A \pmod 1\subset [0,1]$ are
  asymptotically Poissonian as $N$ increases. We define an
  inhomogeneous generalization of the concept of pair correlation, and
  we consider the corresponding doubly metric question. Many of the
  results from the usual setting carry over to this new
  setting. Moreover, the double metricity allows us to establish some
  new results whose singly metric analogues are missing from the
  literature.
\end{abstract}


\thispagestyle{empty}

\setcounter{tocdepth}{1} 
{\footnotesize{\tableofcontents}}

\section{Metric Poissonian pair correlations}\label{sec:intr-pair-corr-1}

Given a sequence $\mathbf{x} = (x_n)_{n=1}^\infty$ of points on the
torus $\TT=\RR/\ZZ$, a point $\gamma\in\TT$, and a real number $s>0$,
we are interested in the asymptotic frequency with which
$x_i - x_j\, (i,j \leq N)$ lies in the arc
$[\gamma-s/N, \gamma + s/N] \subset \TT$. That is, we study the
limiting behavior of
\begin{equation*}
  F(\gamma, s, N, \mathbf{x}) = \frac{1}{N}\#\set*{(i, j)\in [N]\times[N]  \mid i\neq j, \norm{x_i - x_j - \gamma} \leq \frac{s}{N}},
\end{equation*}
where $\norm{\cdot}$ denotes distance to $0\in \TT$ and
$[N]:=\set{1, 2, \dots, N}$. For an increasing sequence of natural numbers
$\calA = (a_n)_{n=1}^\infty$ and
$\alpha, \gamma\in\TT$, we denote
$F (\alpha, \gamma, s, N, \calA)=F(\gamma, s, N, \mathbf x)$, where
the sequence $\mathbf x$ is defined by $x_n = a_n\alpha \pmod 1$.

Much attention is paid to the behavior of $F$ when $\gamma=0$. In this case 
$F(0,s, N, \mathbf x)$ is called the \emph{pair correlation statistic} of
$\mathbf x$. One says that $\mathbf x$ has \emph{Poissonian pair correlations (PPC)}
if
\begin{equation*}
(\forall s>0)\qquad \lim_{N\to\infty} F(0, s, N, \mathbf x)= 2s.
\end{equation*}
Like equidistribution, PPC is a marker of randomness; a sequence of
points on the circle which have been chosen independently and
uniformly at random will almost surely have Poissonian pair
correlations, just as they will almost surely be equidistributed in
the circle. In fact, PPC is a stronger feature of randomness than
equidistribution is, in the sense that any sequence which has PPC must
also be
equidistributed~\cite{AistleitnerLachmannPausinger,GrepstadLarcher,HKLSUPPC,
  Marklof2019}. The converse fails. For example, an orbit of any
irrational circle rotation equidistributes, but does not have
Poissonian pair correlations. Indeed, the three gaps theorem implies
that the gap distribution of the points of such an orbit is far from
random.

In the past two decades, one of the questions of greatest interest in
this area has been whether a given $\calA\subset\NN$ has \emph{metric
  Poissonian pair correlations (MPPC)}, that is, if for Lebesgue
almost every $\alpha\in\TT$, the sequence
$\parens*{a_n\alpha \pmod 1}_{n=1}^\infty$ has Poissonian pair
correlations. Inspired by a problem in quantum mechanics, Rudnick and
Sarnak~\cite{RudnickSarnak} showed that $\parens*{n^k}_{n\geq 1}$ has
MPPC whenever $k\geq 2$. But $\parens{n}_{n\geq 1}$ (that is,
$\calA = \NN$) does not have MPPC because for every $\alpha$ the
corresponding sequence on $\TT$ is an orbit of the circle rotation
over angle $2\pi\alpha$, and, as we have mentioned, orbits of circle
rotations do not have PPC. To put it informally, the problem for
$\calA = \NN$ arises from the fact that initial strings from the
sequence $\NN$ have too much additive structure; there are too many
different ways to achieve any given $d \in [N]-[N]$ as a difference of
two elements of $[N]$, and as a result, $F(\alpha, 0, s, N, \NN)$
counts events that have been rigged by this extra structure of $[N]$
to occur with non-random
regularity. In~\cite{Aistleitneretaladditiveenergy}, Aistleitner,
Larcher, and Lewko made an important forward stride in the study of
MPPC by putting this informal reasoning on a rigorous footing. They
connected the pair correlations of $\calA$ to the asymptotic behavior
of the additive energy
\begin{equation*}
  E(A) := \#\set*{(a,b,c,d)\in A^4 \mid a+b=c+d}
\end{equation*}
where $A:=A_N$ denotes the smallest $N$ elements of $\calA$. Specifically,
they proved the following theorem.

\begin{theorem}[{\cite[Theorem~1]{Aistleitneretaladditiveenergy}}]\label{thm:all}
  For an infinite subset $\calA \subset \NN$, let $A_N$ denote its
  smallest $N$ elements. If there exists some $\delta>0$ such that
  \begin{equation*}
    E(A_N) \leq N^{3-\delta}
  \end{equation*}
  for all sufficiently large $N$, then $\calA$ has MPPC.
\end{theorem}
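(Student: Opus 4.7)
The plan is to prove the theorem by the second moment method. Fix $s > 0$ and view $F_N(\alpha) := F(\alpha, 0, s, N, \calA)$ as a random variable on $([0,1], \Leb)$. The strategy is to show $\EE F_N \to 2s$, bound $\mathrm{Var}(F_N)$ by a quantity involving $E(A_N)$, then apply Chebyshev's inequality along a suitably sparse subsequence of $N$ together with the Borel--Cantelli lemma, and finally interpolate to the full sequence and pass to a full-measure set that works simultaneously for every $s > 0$.

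The mean is immediate. For each ordered pair $(i,j)$ with $i \neq j$, the integer $a_i - a_j$ is nonzero, so $\alpha \mapsto (a_i - a_j)\alpha$ preserves Lebesgue measure on $\TT$ and $\int_0^1 \mathbf{1}_{\norm{(a_i-a_j)\alpha} \leq s/N}\, d\alpha = 2s/N$. Summing over the $N(N-1)$ such pairs and dividing by $N$ gives $\EE F_N = 2s(N-1)/N \to 2s$.

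The crux is to show that $\mathrm{Var}(F_N) \ll E(A_N)(\log N)^C / N^3$ for some absolute $C \geq 0$. Expanding $\EE F_N^2$ as a sum over quadruples $(i,j,k,l) \in [N]^4$ with $i\neq j$, $k\neq l$ and using the Fourier expansion $\mathbf{1}_{[-s/N, s/N]}(x) = \sum_h c_h e^{2\pi i h x}$ (where $c_0 = 2s/N$ and $\abs{c_h} \leq \min(2s/N, 1/(\pi\abs{h}))$), each pair-pair integral reduces to a sum $\sum_{h_1(a_i - a_j) + h_2(a_k - a_l) = 0} c_{h_1} c_{h_2}$. The term $h_1 = h_2 = 0$ contributes exactly $(2s/N)^2$ and, after subtracting $(\EE F_N)^2$, cancels out; the resonant quadruples with $a_i - a_j = \pm(a_k - a_l)$, of which there are at most $2E(A_N)$ by the definition of additive energy, yield the dominant contribution of order $E(A_N) \cdot (2s/N)$; the remaining non-resonant quadruples are controlled using the $1/\abs{h}$ decay of $c_h$ to sum the off-diagonal Fourier modes. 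Dividing by $N^2$ and tracking polylog factors gives the claimed variance bound.

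Under the hypothesis $E(A_N) \leq N^{3-\delta}$, this bound is $O(N^{-\delta}(\log N)^C)$, so Chebyshev makes $\Leb\set{\alpha : \abs{F_N(\alpha) - 2s} > \eps}$ summable along the polynomial subsequence $N_k := \floor{k^{2/\delta}}$, and Borel--Cantelli then gives $F_{N_k}(\alpha) \to 2s$ for a.e.\ $\alpha$. To pass to arbitrary $N \in [N_k, N_{k+1}]$, I would compare $F_N$ against $F_{N_k}$ and $F_{N_{k+1}}$: the extra pairs with an index in $(N_k, N_{k+1}]$ and the mismatch between arcs of widths $s/N$ and $s/N_{k\pm 1}$ are absorbed into an error that vanishes provided $N_{k+1}/N_k \to 1$ quickly enough, which is arranged by choosing the exponent in the subsequence appropriately. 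A countable intersection over rational $s > 0$, together with monotonicity of $F(\alpha, 0, s, N, \calA)$ in $s$, extends the conclusion to every $s > 0$ simultaneously. I expect the main obstacle to be the Fourier-analytic variance estimate: the non-resonant cross terms must be summed using only the logarithmic savings available from the $1/\abs{h}$ decay of the Fourier coefficients, and tracking the polylog factors carefully is delicate; by contrast, the interpolation step is technical but routine, and is precisely the reason one must work along a polynomial subsequence rather than $N$ itself.
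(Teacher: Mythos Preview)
This theorem is not proved in the present paper; it is quoted from \cite{Aistleitneretaladditiveenergy} as background, so there is no in-paper proof to compare against. Your outline is nonetheless the right shape and matches the strategy of the original Aistleitner--Larcher--Lewko argument: compute the mean, bound the variance, apply Chebyshev and Borel--Cantelli along a sparse subsequence, then interpolate and intersect over rational $s$.

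One step needs to be stated more honestly. In the homogeneous problem the indicators $\alpha\mapsto\bone_{\norm{d\alpha}\leq s/N}$ are \emph{not} pairwise independent (contrast Lemma~\ref{lem:pairwise}, which is exactly what makes the doubly metric proof in Section~\ref{sec:proof-theor-refthm:f} so clean). The covariance of the $d_1$- and $d_2$-indicators is governed by the solutions of $h_1 d_1 + h_2 d_2 = 0$, and after parametrising these by $h_1 = k d_2/g,\ h_2 = -k d_1/g$ with $g=\gcd(d_1,d_2)$, summing $|c_{h_1}c_{h_2}|$ picks up a factor depending on $g$. Weighting by $r_A(d_1)r_A(d_2)$ and summing then leaves you with a GCD sum of G\'al type, and bounding that by $E(A_N)$ times a subpolynomial factor is the real content of the theorem. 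The phrase ``controlled using the $1/|h|$ decay'' undersells this: naive use of the decay gives factors like $\gcd(d_1,d_2)^2/(d_1 d_2)$, and it is the Carleson--Hunt/G\'al-type bound on the resulting GCD sum (in the original paper, via the Lewko--Radziwi{\l}{\l} estimate) that converts this into something of size $E(A_N)N^{-3}$ up to subpolynomial loss. You are right to flag this as the main obstacle; just be aware that it is a genuine input, not a bookkeeping exercise. Once that bound is in hand, your polynomial subsequence, interpolation via the sandwich $F(\cdot, sN_k/N, N_k)\leq F(\cdot, s, N)\leq F(\cdot, sN_{k+1}/N, N_{k+1})$, and countable intersection over rational $s$ are all correct and standard.
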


\begin{remark}
  The Rudnick--Sarnak result follows from Theorem~\ref{thm:all}. On
  the other hand, it is easy to see that Theorem~\ref{thm:all} does
  not apply to $\calA=\NN$, since in this case one has
  $E(A_N)\gg N^3$.
\end{remark}

\begin{remark}[On notation]
  For functions $f,g:\NN\to\RR_{\geq 0}$ we use $f\ll g$ to mean
  $f= O(g)$ and $f\gg g$ to mean $g = O(f)$. If both hold, we write
  $f\asymp g$. By $f\sim g$ we mean that $(f/g)\to 1$ as the argument
  increases to $\infty$.
\end{remark}

Aistleitner, Larcher, and Lewko asked whether $E(A_N) = o(N^3)$ is
necessary and sufficient for $\calA$ to have MPPC. In an appendix to
their paper, Bourgain answered the question: it is necessary but not
sufficient. This left the question of whether there is some additive
energy threshold separating the sets that have MPPC from those that
do not. In~\cite{Bloometal}, Bloom, Chow, Gafni, and Walker formulated
the following question proposing a location for such a threshold:

\begin{question}[{\cite[Fundamental Question~1.7]{Bloometal}}]\label{q:fq}
  Let $\calA\subset\NN$ be an infinite set and suppose that
  \begin{equation*}
    E(A_N) \sim N^3\psi(N)
  \end{equation*}
  where $\psi: \NN \to [0,1]$ is some weakly decreasing function. Is
  convergence of the series $\sum\psi(N)/N$ necessary and sufficient for $\calA$
  to have metric Poissonian pair correlations?
\end{question} 

The answer is no, as we will soon see, but there was good reason to
believe otherwise. Previously, Walker had proved that the set of
primes does not have MPPC~\cite{Walker}. When $\calA$ is the set of
primes, one has $E(A_N)\asymp N^3(\log N)^{-1}$. Moreover, Bloom
\emph{et al.}~constructed sets with
\begin{equation*}
  E(A_N) \asymp \frac{N^3}{\log N \log\log N}
\end{equation*}
which also do not have MPPC~\cite{Bloometal}. (And later, Lachmann and
Technau constructed sets with
\begin{equation}\label{eq:12}
  E(A_N) \asymp \frac{N^3}{\log N \log\log N \dots \underbrace{\log\dots\log}_nN}
\end{equation}
which do not have MPPC~\cite{LachmannTechnau}.) Furthermore, one can
construct for any $\eps>0$ a set $\calA$ having
\begin{equation*}
  E(A_N) \asymp \frac{N^3}{\log N (\log\log N)^{1+\eps}},
\end{equation*}
such that $\calA$ does have
MPPC~\cite{AistleitnerLachmannTechnau}. Finally, since the methods of
proof in some of the above results involved Khintchine's theorem on
Diophantine approximation in a crucial way, it was natural to suspect
that the condition in Khintchine's theorem---\emph{i.e.}~convergence
of the sum $\sum\psi(n)/n$---is what should define the necessary and
sufficient threshold in the Question~\ref{q:fq}. 

Regarding necessity, Aistleitner, Lachmann, and Technau showed that
for every $\eps>0$ there exists $\calA$ having MPPC such that
\begin{equation*}
  E(A_N) \gg \frac{N^3}{(\log N)^{\frac{3}{4}+\eps}}
\end{equation*}
thereby answering that part of Question~\ref{q:fq} in the negative,
and putting an end to the idea that there is a threshold at
all~\cite{AistleitnerLachmannTechnau}---at least a \emph{two-way}
threshold. Sufficiency remains open. The main results of~\cite{Bloometal}
were in support of it. For example, Bloom \emph{et al.}~proved the
following.

\begin{theorem}[{\cite[Theorem~1.4]{Bloometal}}]
  If there is some constant $\xi > 0$ for which
  \begin{equation*}
    E(A_N) \ll \frac{N^3}{(\log N)^{2+\xi}} \quad\textrm{and}\quad \delta(A_N) \gg \frac{1}{(\log N)^{2+2\xi}},
  \end{equation*}
  where $\delta(A_N)$ denotes density, then $\calA$ has MPPC.
\end{theorem}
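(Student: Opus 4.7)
The plan is to apply the second-moment method of Rudnick--Sarnak, as refined by Aistleitner--Larcher--Lewko, to the statistic $F(\alpha,0,s,N,\calA)$. Fix $s>0$. A Fubini calculation gives
\[
  \int_{\TT} F(\alpha,0,s,N,\calA)\,d\alpha = \frac{2s(N-1)}{N} \longrightarrow 2s,
\]
using that $\Leb\set{\alpha\in\TT:\norm{(a_i-a_j)\alpha}\leq s/N} = 2s/N$ for every off-diagonal pair. The entire task is to establish concentration of $F(\alpha,0,s,N,\calA)$ about this mean for Lebesgue-a.e.~$\alpha$.

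Regrouping pairs $(i,j)\in[N]^2$ by their difference $h=a_i-a_j$, write $R_N(h):=\#\set{(i,j)\in[N]^2:a_i-a_j=h}$, so that $\sum_{h} R_N(h)^2 = E(A_N)$. Expand the square in $F-2s$ and use that the arc indicators $X_h(\alpha):=\bone[\norm{h\alpha}\leq s/N]$ satisfy $X_hX_{h'}=X_h$ when $h'=\pm h$ and are ``nearly independent'' otherwise. A Fourier-analytic estimate---smoothing the characteristic function of $[-s/N,s/N]$ by Fejér or Selberg majorants and applying Parseval---should yield a variance bound of the form
\[
  \int_{\TT}\abs{F(\alpha,0,s,N,\calA)-2s}^2\,d\alpha \;\ll\; \frac{s^2\,E(A_N)(\log N)^{c}}{N^{3}} + \mathcal{E}(s,N),
\]
where $\mathcal{E}(s,N)$ is an error term arising from Fourier tails and is sensitive to the spread of $A_N$. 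Under the hypothesis $E(A_N)\ll N^3(\log N)^{-2-\xi}$ the main term is $\ll (\log N)^{-2-\xi+c}$, which, along $N_k=2^k$, sums to $\sum_k k^{-2-\xi+c}<\infty$ for $c$ small enough. Chebyshev plus Borel--Cantelli then give $F(\alpha,0,s,N_k,\calA)\to 2s$ for a.e.~$\alpha$ and every $s$ in a countable dense subset of $\RR_{>0}$. To pass from the subsequence $N_k$ to the full sequence and to every $s>0$, I exploit monotonicity of $F$ in its $s$-argument: for $N_k\leq N<N_{k+1}$,
\[
  \tfrac{N_k}{N}F(\alpha,0,s\tfrac{N_k}{N},N_k,\calA)\;\leq\; F(\alpha,0,s,N,\calA)\;\leq\; \tfrac{N_{k+1}}{N}F(\alpha,0,s\tfrac{N_{k+1}}{N},N_{k+1},\calA),
\]
and refine $N_k=\floor{(1+\eta_j)^k}$ through $\eta_j\to 0$ by a diagonal argument to close the sandwich.

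The principal obstacle is the variance estimate itself, and it is here that the density hypothesis does its real work. The tail error $\mathcal{E}(s,N)$ is governed by bounds on $R_N(h)$ for large $h\in A_N-A_N$, which are in turn controlled by $a_N$; the density hypothesis $\delta(A_N)\gg (\log N)^{-2-2\xi}$ translates to $a_N\ll N(\log N)^{2+2\xi}$, which should bound $\mathcal{E}(s,N)$ at a level compatible with the additive-energy main term. Balancing the $(\log N)^{-2-\xi}$ gain from the additive-energy hypothesis against the $(\log N)^{2+2\xi}$ cost coming from density is the whole quantitative heart of the argument---the two exponents in the theorem are tuned precisely so that, after the Fourier smoothing and after summation along $N_k=2^k$, the resulting series converges. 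Getting the exponent $c$ correct, and verifying that the hypotheses are exactly sharp for Borel--Cantelli to apply, is the step I expect to require the most care.
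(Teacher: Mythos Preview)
The paper does not contain a proof of this theorem; it is merely quoted from \cite{Bloometal} as background for Question~\ref{q:fq}, so there is no in-paper argument to compare your proposal against.

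As for the proposal itself, there is a genuine gap at the variance step. In the homogeneous setting the indicators $X_h(\alpha)=\bone[\norm{h\alpha}\leq s/N]$ are far from pairwise independent---contrast Lemma~\ref{lem:pairwise} of this paper, which is precisely what makes the \emph{doubly} metric problem tractable. The covariance of $X_h$ and $X_{h'}$ is governed by $\gcd(h,h')$, and expanding the second moment produces a GCD sum over $A_N-A_N$ weighted by the representation function $R_N$. Bounding such GCD sums is the entire substance of the Aistleitner--Larcher--Lewko and Bloom--Chow--Gafni--Walker arguments; it is not a matter of ``smoothing the characteristic function \dots\ and applying Parseval.'' Your displayed variance bound, with an unspecified $(\log N)^c$ factor and a placeholder error $\mathcal{E}(s,N)$, is exactly the hard part and is not justified by anything in the sketch. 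Moreover, your account of where the density hypothesis enters---controlling $R_N(h)$ for large $h$ via Fourier tails---is not how \cite{Bloometal} uses it: there the density bound $a_N\ll N(\log N)^{2+2\xi}$ controls how many elements of $A_N-A_N$ are divisible by a fixed modulus, feeding a Diophantine truncation that separates $\alpha$ near rationals with small denominator from the rest. The balance of exponents you allude to happens inside that truncation, not in a Fourier tail.
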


Later, Bloom and Walker added more evidence for the sufficiency part
of Question~\ref{q:fq} in the form of the following result.

\begin{theorem}[{\cite[Theorem~6]{BloomWalker}}]
  There exists an absolute positive constant $C$ such that for any
  $\calA\subset\NN$,
  \begin{equation}\label{eq:bloomwalker}
    E(A_N) \ll \frac{N^3}{(\log N)^C}
  \end{equation}
  implies that $\calA$ has MPPC.
\end{theorem}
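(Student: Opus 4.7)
My approach is the Fourier-analytic second-moment method along a sparse subsequence, with a sandwich argument to pass from the subsequence to all $N$. The crucial ingredient is a polylogarithmic bound for the off-diagonal Fourier contribution, where GCD-type sums enter and implicitly fix the absolute constant $C$.

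First, for fixed $s>0$ I expand the indicator $\chi=\mathbf{1}_{[-s/N,s/N]}$ in Fourier series $\chi(x)=\sum_k\hat\chi(k)e(kx)$, with $\hat\chi(0)=2s/N$ and $\abs{\hat\chi(k)}\leq\min(2s/N,1/(\pi\abs{k}))$. Writing $S_k(\alpha)=\sum_{n\leq N}e(ka_n\alpha)$ and $T_k(\alpha)=\abs{S_k(\alpha)}^2-N$, I obtain
\[
  F(\alpha,0,s,N,\calA) = 2s\cdot\frac{N-1}{N} + \frac{1}{N}\sum_{k\neq 0}\hat\chi(k)T_k(\alpha),
\]
so MPPC for $\calA$ reduces to showing that the tail sum tends to $0$ for a.e.\ $\alpha$.

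Second, I bound the variance of $F$ on $\TT$. The diagonal part $k_1=k_2$ is handled by Plancherel and the identity $\int_\TT\abs{S_k(\alpha)}^4\,d\alpha = E(A_N)$ valid for every $k\neq 0$: this gives a contribution of order $N^{-2}\sum_k\abs{\hat\chi(k)}^2 E(A_N)\ll s\cdot E(A_N)/N^3\ll s/(\log N)^C$. The off-diagonal terms $k_1\neq k_2$ count weighted quadruples with $k_1(a_i-a_j)=k_2(a_\ell-a_m)$ and are more subtle. Here I would invoke polylogarithmic GCD-sum estimates in the spirit of Bondarenko--Seip (or an analogous combinatorial decoupling) to control the twisted energies, absorbing everything into the diagonal bound at the cost of a $(\log N)^{O(1)}$ factor. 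The outcome should be a variance bound of the form $(\log N)^{O(1)-C}$.

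Third, I apply Chebyshev plus Borel--Cantelli along the subsequence $N_\ell=\lfloor(1+\eta)^\ell\rfloor$ for small $\eta>0$. Once $C$ is large enough that the variance decays faster than $1/\ell$, the tail probabilities in $\ell$ are summable, so for every rational $s>0$ and every such $\eta$ there is a full-measure set of $\alpha$ on which $F(\alpha,0,s,N_\ell,\calA)\to 2s$. A sandwich argument then controls $F(\alpha,0,s,N,\calA)$ for $N_\ell\leq N<N_{\ell+1}$ by comparison with $F(\alpha,0,s',N_\ell,\calA)$ and $F(\alpha,0,s'',N_{\ell+1},\calA)$ for suitably rescaled $s',s''$; the ratio tends to $1$ as $\eta\to 0$. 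Intersecting over a countable family of $s$ and $\eta$ yields MPPC.

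The main obstacle is the off-diagonal variance estimate. The naive Cauchy--Schwarz bound $\abs{\int T_{k_1}\overline{T_{k_2}}}\leq E(A_N)$ combined with $\sum_{k\neq 0}\abs{\hat\chi(k)}\asymp\log N$ only yields variance of order $N(\log N)^{2-C}$, which diverges. All the real work lies in exploiting cancellation or arithmetic structure in the off-diagonal sum, and it is the quality of the available GCD-sum or energy-decoupling inequality that determines how large $C$ must be taken.
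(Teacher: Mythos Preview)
The paper does not contain a proof of this theorem. It is stated as a cited result from Bloom and Walker~\cite{BloomWalker}, listed among the background results in Section~\ref{sec:intr-pair-corr-1} to motivate the inhomogeneous problem, and then the paper moves on. There is therefore nothing in the paper to compare your proposal against.

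That said, your sketch does track the actual structure of the Bloom--Walker argument: Fourier expansion, second-moment/variance bound, Chebyshev plus Borel--Cantelli along a lacunary subsequence, and a sandwich to interpolate. You have also correctly identified where all the content lies, namely the off-diagonal contribution to the variance, and that the relevant tool is a GCD-sum estimate. In Bloom--Walker the decisive input is a new bound on GCD sums obtained via sum--product methods, which is what furnishes the absolute constant $C$; your ``I would invoke polylogarithmic GCD-sum estimates in the spirit of Bondarenko--Seip'' is pointing at the right place but is not yet a proof, since the whole paper~\cite{BloomWalker} is devoted to establishing exactly that estimate. So your proposal is an accurate high-level outline, with the substantive step deferred to a citation.
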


Of course, Question~\ref{q:fq} supposes that that constant can be any
$C>1$. Indeed, for a version of the problem on the $d$-dimensional
torus (where $d\geq 2$), Hinrichs \emph{et al.}~have shown that if
there exists some $C>1$ for which~(\ref{eq:bloomwalker}) holds, then
$\calA$ has ``MPPC in dimension $d$''~\cite{HKLSUPPC}.

No further progress has been made on the sufficiency part of
Question~\ref{q:fq}. One of the results of this
note---Theorem~\ref{thm:fundyes}---establishes an inhomogeneous
analogue of the sufficiency part of the question. We introduce the
inhomogeneous problem in the next section.

\section{Doubly metric Poissonian pair correlations}
\label{sec:inhom-pair-corr}


As we have mentioned, an independent, identically distributed sequence
$\mathbf x=(x_n)_n$ of random variables having the uniform
distribution on $\TT$ will almost surely have Poissonian pair
correlations. In fact, it is not hard to show that for any fixed
$\gamma\in\TT$ such a random sequence $\mathbf x\subset\TT$ almost surely satisfies
\begin{equation}\label{eq:gammappc}
  (\forall s>0)\qquad \lim_{N \to \infty}F (\gamma, s, N,\mathbf x) =  2s.
\end{equation}
When~(\ref{eq:gammappc}) holds for a sequence $\mathbf x \in \TT$, we
will say it has \emph{Poissonian pair correlations with inhomogeneous
  parameter $\gamma\in\TT$ (or $\gamma$-PPC)}. Notice then that, by
Fubini's theorem, the randomly generated sequence $\mathbf x$ will
almost surely have $\gamma$-PPC for a full-measure set of
$\gamma\in\TT$. We will present a proof of the following proposition
that serves as template for some of the other results.

\begin{proposition}\label{prop:random}
  Let $\gamma$ be chosen randomly and uniformly from $\TT$ and
  $\mathbf x = (x_n)_{n=1}^\infty$ be a randomly, uniformly, and
  independently chosen sequence of points on $\TT$. Then, almost
  surely, $\mathbf x$ has $\gamma$-PPC. (That is,~(\ref{eq:gammappc})
  almost surely holds.)
\end{proposition}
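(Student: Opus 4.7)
The plan is to establish $F(\gamma, s, N, \mathbf{x}) \to 2s$ in $L^2$ with a summable-along-a-subsequence rate, then apply Borel--Cantelli and interpolate in both $s$ and $N$.

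Write $F(\gamma, s, N, \mathbf{x}) = \frac{1}{N}\sum_{i \neq j \leq N} \mathbf{1}_{A_{ij}}$ with $A_{ij} = \set{\norm{x_i - x_j - \gamma} \leq s/N}$. Because $\gamma$ and the $x_n$ are jointly independent and uniform on $\TT$, the difference $x_i - x_j - \gamma$ is itself uniform on $\TT$, so $\PP(A_{ij}) = 2s/N$ and $\EE[F(\gamma, s, N, \mathbf{x})] = 2s(1 - 1/N)$. For the variance, a case analysis shows that $\mathrm{Cov}(\mathbf{1}_{A_{ij}}, \mathbf{1}_{A_{kl}}) = 0$ whenever $\set{i,j} \neq \set{k,l}$: conditioning first on $\gamma$, and then (when $\set{i,j}\cap\set{k,l}$ contains a single index) on the shared coordinate, makes the two events conditionally independent with conditional probability $2s/N$ each. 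Only the diagonal pairs $(k,l)=(i,j)$ and reversed pairs $(k,l)=(j,i)$ contribute; there are $O(N^2)$ such terms with covariance $O(s/N)$ each, giving $\mathrm{Var}(F(\gamma, s, N, \mathbf{x})) \ll s/N$.

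Chebyshev's inequality then yields $\PP(\abs{F(\gamma, s, N, \mathbf{x}) - 2s} > \eps) \ll s/(\eps^2 N)$ for all large $N$. Along the subsequence $N_k = k^2$ this bound is summable in $k$, so Borel--Cantelli gives, for each fixed $s > 0$, that $F(\gamma, s, N_k, \mathbf{x}) \to 2s$ almost surely. Intersecting the resulting full-measure events over $s$ in a countable dense subset of $(0,\infty)$ and using the fact that $F(\gamma, \cdot, N, \mathbf{x})$ is non-decreasing in $s$ (with continuous limit $2s$) upgrades this to: almost surely, $F(\gamma, s, N_k, \mathbf{x}) \to 2s$ for every $s > 0$. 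Finally, for $N \in [N_k, N_{k+1}]$, restricting or enlarging the counting range and adjusting the window width yields the sandwich
\begin{equation*}
\frac{N_k}{N_{k+1}} F\!\left(\gamma, \frac{sN_k}{N_{k+1}}, N_k, \mathbf{x}\right) \leq F(\gamma, s, N, \mathbf{x}) \leq \frac{N_{k+1}}{N_k} F\!\left(\gamma, \frac{sN_{k+1}}{N_k}, N_{k+1}, \mathbf{x}\right),
\end{equation*}
whose two sides both tend to $2s$ since $N_{k+1}/N_k \to 1$ and the limit $2s$ is continuous in $s$.

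The one delicate point is the variance bookkeeping: the inhomogeneous parameter $\gamma$ is shared across every indicator $\mathbf{1}_{A_{ij}}$, raising the worry of long-range correlations among pairs with disjoint or nearly disjoint index supports. Conditioning on $\gamma$ at the outset of each case dispels this worry and confirms that the inhomogeneity does not degrade the variance estimate beyond the expected $O(s/N)$; once that is in hand the rest of the argument is a textbook second-moment reduction.
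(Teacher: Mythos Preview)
Your proof is correct and follows the same architecture as the paper's: compute the mean, bound the variance of $F(s,N)$ by $O(s/N)$, apply Chebyshev and Borel--Cantelli along the subsequence $N_k=k^2$, then sandwich to pass to all $N$ and all $s$. The one substantive difference is how you obtain the variance bound. The paper expands each indicator in a Fourier series and shows that $\bone_{i,j,\eps}$ and $\bone_{k,\ell,\eps'}$ are uncorrelated whenever $(i,j)\neq(k,\ell)$ as ordered pairs (so even reversed pairs are uncorrelated). You instead condition on $\gamma$ and, when one index is shared, on that coordinate; this gives independence whenever $\{i,j\}\neq\{k,\ell\}$, and you simply absorb the $O(N^2)$ reversed pairs into the diagonal contribution. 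Your conditioning argument is more elementary and sidesteps Fourier analysis entirely; on the other hand, the paper's Fourier computation is precisely what gets recycled as Lemma~\ref{lem:pairwise} in the proof of Theorem~\ref{thm:fundyes}, where the $x_n$ are no longer independent but of the form $a_n\alpha$, so from the paper's standpoint it is the more reusable template.
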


With this proposition mind, we propose to study a doubly metric
inhomogeneous variant of the notion of metric Poissonian pair
correlations. For an infinite subset $\calA\subset\NN$ and an
increasing integer sequence $\set{N_t}$, we will say
that $\calA$ has \emph{doubly metric Poissonian pair correlations
  (DMPPC) along the subsequence $\set{N_t}\subset\NN$} if for almost
all pairs $(\alpha, \gamma)$, we have
\begin{equation}\label{eq:dmppc}
  (\forall s>0)\qquad \lim_{t\to \infty}F (\alpha, \gamma, s, N_t, \calA) =  2s.
\end{equation}
If this holds with $\set{N_t} = \NN$, then we will just say that
$\calA$ has DMPPC.


\

The double metricity of the inhomogeneous set up makes certain
calculations easier than in the homogeneous setting. As a result, we
are able to confirm the sufficiency part of Question~\ref{q:fq} for
DMPPC.

\begin{theorem}\label{thm:fundyes}
  If $\calA\subset\NN$ is an infinite set such that
  \begin{equation*}
    E(A_N) \ll N^3\psi(N)
  \end{equation*}
  where $\psi:\NN\to[0,1]$ is a weakly decreasing function such that
  the series $\sum \psi(N)/N$ converges, then $\calA$ has doubly
  metric Poissonian pair correlations. (That is,~(\ref{eq:dmppc}) holds
  for almost all pairs $(\alpha, \gamma)\in\TT^2$.)
\end{theorem}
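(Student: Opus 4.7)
The plan is to carry out a second-moment argument in $(\alpha,\gamma)\in\TT^2$, in the spirit of the proof of Proposition~\ref{prop:random}. The payoff of the double metricity is that the indicator $\bone[\|(a_i-a_j)\alpha - \gamma\|\leq s/N]$ has constant $\gamma$-integral $2s/N$ whenever $a_i\neq a_j$, so the GCD/lattice-point complications of the singly metric variance calculation do not arise. Fubini immediately gives $\EE F(\cdot, s, N,\calA) = 2s(1-1/N)$. For the second moment, for each fixed $\alpha$ the $\gamma$-intersection of two arcs of length $2s/N$ centered at $(a_i-a_j)\alpha$ and $(a_k-a_l)\alpha$ has measure $\max(0, 2s/N - \|((a_i-a_j)-(a_k-a_l))\alpha\|)$; integrating in $\alpha$ yields $2s/N$ when $a_i-a_j = a_k-a_l$ and exactly $(2s/N)^2$ otherwise. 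The count of ordered quadruples with $a_i-a_j=a_k-a_l$, $i\neq j$, $k\neq l$ equals $E(A_N)-N^2$, so a direct calculation and subtraction of $(\EE F)^2$ give
\begin{equation*}
  \mathrm{Var}\,F(\cdot, s, N, \calA) \ll \frac{s\,E(A_N)}{N^3} + \frac{s+s^2}{N} \ll s\,\psi(N) + \frac{s+s^2}{N}.
\end{equation*}

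Next, for each rational $s>0$ and each $\rho\in\{1+1/k : k\geq 1\}$, set $N_t := \floor*{\rho^t}$. Cauchy condensation applied to the weakly decreasing $\psi$ turns the hypothesis $\sum\psi(N)/N<\infty$ into $\sum_t\psi(N_t)<\infty$, whence $\sum_t \mathrm{Var}\,F(\cdot, s, N_t, \calA)<\infty$. Chebyshev together with Borel--Cantelli then yield $F(\alpha,\gamma,s,N_t,\calA)\to 2s$ for Lebesgue-a.e.\ $(\alpha,\gamma)\in\TT^2$. A countable union over the pairs $(s,\rho)$ makes this subsequential convergence simultaneous on a single full-measure set.

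To upgrade from subsequential to full convergence, I will sandwich. For $N_t\leq N < N_{t+1}$, the inclusions $[N_t]\subset[N]\subset[N_{t+1}]$ combined with monotonicity of the pair count in the arc width produce
\begin{equation*}
  \frac{N_t}{N_{t+1}}\, F\bigl(\alpha,\gamma, \tfrac{s N_t}{N_{t+1}}, N_t, \calA\bigr) \leq F(\alpha,\gamma, s, N, \calA) \leq \frac{N_{t+1}}{N_t}\, F\bigl(\alpha,\gamma, \tfrac{s N_{t+1}}{N_t}, N_{t+1}, \calA\bigr).
\end{equation*}
Monotonicity of $F$ in $s$ allows the perturbed arguments to be replaced by rational values slightly inside our countable dense set; as $t\to\infty$ the ratio $N_{t+1}/N_t\to\rho$, and the a.s.\ subsequential limits squeeze $F(N,s)$ into $[2s/\rho^2,\,2s\rho^2]$. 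Sending $\rho\to 1$ along $1+1/k$ yields $F(N,s)\to 2s$ for every rational $s$, and a final monotonicity-in-$s$ approximation extends the conclusion to every $s>0$.

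I expect the interpolation step to be the main technical step. The variance estimate is essentially forced by Fubini once double metricity is exploited, and Chebyshev--Borel--Cantelli is automatic once the variance is summable along a geometric subsequence. The sandwich is standard, but some bookkeeping is needed to ensure that the perturbed $s$-values $s N_{t+1}/N_t$ and $s N_t/N_{t+1}$ can be controlled by the a.s.\ convergence already established on a fixed countable dense set, and that the double union of null sets over $s$ and $\rho$ still leaves a single full-measure subset of $\TT^2$ on which the conclusion~\eqref{eq:dmppc} holds.
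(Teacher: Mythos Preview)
Your proposal is correct and follows essentially the same strategy as the paper: compute the mean and variance of $F(\cdot,\cdot,s,N,\calA)$ over $\TT^2$, exploit the summability of the variance along a geometric subsequence via Chebyshev and Borel--Cantelli, and then sandwich to pass to the full sequence. The only cosmetic difference is that the paper establishes pairwise independence of the indicators $\bone_{d,s/N}$ through a Fourier expansion (its Lemma~\ref{lem:pairwise}), whereas you compute the $\gamma$-integral of the product of two indicators directly as a tent-function convolution and then integrate in $\alpha$; both routes yield the same variance bound $\sigma^2(F)\ll s\,E(A_N)/N^3$.
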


\begin{remark}
  In particular, if there is some $C>1$ for
  which~\eqref{eq:bloomwalker} holds, then $\calA$ has DMPPC (a result
  which in the homogenous setting is only known for higher
  dimensions).
\end{remark}

For functions on the divergence side of Question~\ref{q:fq},  we
construct sets $\calA\subset\NN$ that do not have DMPPC. 

\begin{theorem}\label{thm:div}
  Suppose $\psi:\NN\to[0,1]$ is a weakly decreasing function such that
  there exists $\delta>0$ for which $N^{3-\delta}\psi(N)$ is
  increasing, and such that $\sum\psi(N)/N$ diverges. Then there
  exists an infinite set $\calA\subset\NN$ such that
  \begin{equation*}
    E(A_N) \asymp N^3\psi(N)
  \end{equation*}
  and such that $\calA$ does not have doubly metric Poissonian pair correlations.
\end{theorem}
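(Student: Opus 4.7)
My plan is to construct $\calA$ with deliberate arithmetic-progression structure at a carefully chosen scale sequence, and then to deduce DMPPC failure via a divergence Borel--Cantelli argument on $\TT^2$, mimicking the strategy used for the homogeneous divergence-type results of Bloom \emph{et al.}~\cite{Bloometal} and Lachmann--Technau~\cite{LachmannTechnau}.

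First, I build the set. Fix an increasing sequence of scales $N_1<N_2<\cdots$ (to be calibrated later) and write $\calA=\calR\sqcup\bigsqcup_k B_k$, where each $B_k$ is an arithmetic progression of length $L_k\asymp N_k\psi(N_k)^{1/3}$ and common difference $D_k$, and $\calR$ is a low-additive-energy filler so that $|A_{N_k}|=N_k$. Taking $D_k$'s to grow much faster than $\max B_{k-1}$ kills the cross-energy, giving $E(A_{N_k})\asymp\sum_{j\leq k}L_j^3\asymp L_k^3\asymp N_k^3\psi(N_k)$. The monotonicity of $N^{3-\delta}\psi(N)$ enters twice here: it forces the $L_k$'s to be strictly increasing (so the newest block dominates the cumulative energy at its scale, since $\psi(2N)\geq \psi(N)/2^{3-\delta}>\psi(N)/8$), and it allows the pointwise interpolation $E(A_N)\asymp N^3\psi(N)$ between scales.

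Second, I analyze the clustering. If $\|D_k\alpha\|\leq\eta_k$, then the image $\set*{(a_i-a_j)\alpha \pmod 1 : i,j\in B_k}$ lies in the arithmetic progression $\set*{m\|D_k\alpha\| : |m|\leq L_k}\subset\TT$. A direct count shows that for $\gamma$ within $s/N_k$ of any one of these cluster points, the block $B_k$ alone contributes at least $\gg N_k$ pairs to the numerator of $F(\alpha,\gamma,s,N_k,\calA)$, provided $\eta_k\asymp L_k/N_k^2$. Consequently $F(\alpha,\gamma,s,N_k,\calA)\geq 2s+c$ for some absolute $c>0$, at such $(\alpha,\gamma)$.

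Third, I run Borel--Cantelli in $\TT^2$. The bad set
\[
\mathcal{B}_k=\set*{(\alpha,\gamma)\in\TT^2 : F(\alpha,\gamma,s,N_k,\calA)\geq 2s+c}
\]
has $\Leb(\mathcal{B}_k)\gg\eta_k\cdot L_k\cdot s/N_k$ from the count above. The scale sequence $N_k$ can be chosen (not necessarily geometric) so that $\sum_k\Leb(\mathcal{B}_k)=\infty$; this is where the divergence of $\sum\psi(N)/N$ is consumed. Because the $D_k$'s grow essentially freely, the events are quasi-independent in the $\alpha$ direction (the $\gamma$ arcs are fixed), so a divergence Borel--Cantelli lemma yields a positive-measure subset of $\TT^2$ on which $\mathcal{B}_k$ occurs infinitely often---precisely the failure of \eqref{eq:dmppc}.

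The main obstacle will be the simultaneous calibration of $(N_k,L_k,D_k,\eta_k)$: the additive energy must match $N^3\psi(N)$ at every scale, the scale sequence must be dense enough for $\sum\Leb(\mathcal{B}_k)$ to diverge, and the clustering must be strong enough at each scale to provide a \emph{uniform} gap $c$. The hypotheses on $\psi$ are calibrated for this balance---monotonicity of $N^{3-\delta}\psi(N)$ controls the between-scale interpolation and keeps $L_k$ growing, while divergence of $\sum\psi(N)/N$ supplies the Borel--Cantelli input. If the dyadic choice $N_k=2^k$ proves insufficient to push the sum past convergence, the remedy is to insert intermediate scales, at the cost of smaller incremental blocks whose energy contributions are bookkept using the monotonicity hypothesis.
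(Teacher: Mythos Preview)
Your single-AP construction has a genuine quantitative gap in the measure of $\calB_k$. Once $\theta:=\|D_k\alpha\|\leq\eta_k\asymp L_k/N_k^2$, the cluster $\{m\theta:|m|<L_k\}$ spans an interval of length $2L_k\theta\leq 2L_k\eta_k\asymp\psi(N_k)^{2/3}$. Since $\theta\leq\eta_k=\psi(N_k)^{1/3}/N_k\ll s/N_k$ (as $\psi\to 0$), the $L_k$ balls of radius $s/N_k$ about the cluster points overlap heavily, and the $\gamma$-measure where the pair count exceeds $(2s+c)N_k$ is only $\asymp L_k\theta$, not $L_k\cdot s/N_k$. Integrating over $\theta$ gives
\[
\Leb(\calB_k)\ \asymp\ \int_0^{\eta_k} L_k\theta\,d\theta\ \asymp\ L_k\eta_k^2\ \asymp\ \frac{L_k^3}{N_k^4}\ =\ \frac{\psi(N_k)}{N_k}.
\]
Along a geometric sequence $N_k=2^k$ this sums to $\sum_k\psi(2^k)/2^k<\infty$; and your remedy of densifying the scales cannot work, because forcing $\sum_k\psi(N_k)/N_k=\infty$ requires $N_k$ to grow so slowly that $\sum_{j\leq k}L_j=\sum_{j\leq k}N_j\psi(N_j)^{1/3}\gg N_k$ (e.g.\ $N_k=k^c$ with $\psi(N)=1/\log N$ gives $\sum_{j\leq k}L_j\gg k^{c+1}/(\log k)^{1/3}\gg k^c$), so the blocks no longer fit inside $A_{N_k}$. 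Even your stated bound $\eta_k\cdot L_k\cdot s/N_k\asymp s\psi(N_k)^{2/3}/N_k$ is short by this same factor of $N_k$.

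The paper follows Bourgain's construction in~\cite{Aistleitneretaladditiveenergy} instead: at each dyadic scale $N$ the block $B_N$ is a \emph{density-$\psi(N)$} random-like subset of a dilated interval $\Delta_N\cdot\bigl((N/\psi(N),2N/\psi(N)]\cap\NN\bigr)$, so that $\#B_N\asymp N$ and $r_{B_N}(\Delta_N d)\asymp N\psi(N)$ for \emph{every} $|d|\leq N/(10\psi(N))$. This abundance of usable differences is exactly what your single-AP block lacks, and it is exploited by a Farey-fraction estimate (Lemma~\ref{lem:farey}): the bad set $U_N$, defined by a homogeneous condition on $\alpha$ intersected with an inhomogeneous one on $(\alpha,\gamma)$, satisfies $\Leb(U_N)\gg\psi(N)$---the missing factor of $N$ recovered---so $\sum_t\Leb(U_{2^t})\gg\sum_t\psi(2^t)=\infty$ by Cauchy condensation. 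Quasi-independence of the $U_N$ is arranged by choosing the dilations $\Delta_N$ via mixing of the map $x\mapsto mx$ on $\TT$ (Lemma~\ref{lem:deltas}).
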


\begin{remark}
  The assumption that $N^{3-\delta}\psi(N)$ is increasing for some
  $\delta>0$ is only used to ensure that the sets we construct
  actually satisfy $E(A_N)\asymp N^3\psi(N)$. (Notice that since
  $E(A_N)$ increases to infinity, it is natural that there ought to be extra
  requirements on $\psi$ besides just divergence of $\sum\psi(N)/N$.) 
  Theorem~\ref{thm:div} implies, in particular, that there are sets
  $\calA\subset\NN$ satisfying~\eqref{eq:12} that do not have DMPPC. 
\end{remark}

We leave open the full necessity part of Question~\ref{q:fq} for
doubly metric Poissonian pair correlations. It is not entirely clear
what the answer should be. The construction of
Aistleitner--Lachmann--Technau~\cite{AistleitnerLachmannTechnau}
depends crucially on aspects of homogeneous Diophantine approximation
(like continued fractions), so it is not immediately obvious whether
a similar strategy would work for the doubly metric problem.

\

Some of the challenges of establishing DMPPC vanish if we allow
ourselves to consider the limit in~\eqref{eq:dmppc} along subsequences
$\set{N_t}\subseteq \NN$. For DMPPC along sequences, there is the
following.

\begin{theorem}\label{thm:subsequenceofsubsequence}
  Let $\calA\subset\NN$ be an infinite set. For any subsequence
  $\set{N_t}\subset\NN$ such that
  \begin{equation*}
    \lim_{t\to\infty} N_t^{-3}E(A_{N_t})=0,
  \end{equation*}
  there exists a subsequence of $\set{N_t}$ along which $\calA$ has
  doubly metric Poissonian pair correlations.
\end{theorem}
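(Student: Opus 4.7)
The plan is a second-moment argument that exploits the extra averaging provided by the inhomogeneous parameter $\gamma$. For fixed $s>0$, set $f_N(\alpha,\gamma) := F(\alpha,\gamma,s,N,\calA)$ and regard it as a random variable on $\TT^2$ equipped with Lebesgue measure. First I would compute its mean: by Fubini,
\begin{equation*}
\int_{\TT^2} f_N\,d\alpha\,d\gamma = \frac{1}{N}\sum_{\substack{i,j\in[N]\\ i\neq j}}\int_{\TT^2}\mathbf{1}_{\set*{\norm{\alpha(a_i-a_j)-\gamma}\leq s/N}}\,d\alpha\,d\gamma = \frac{N(N-1)}{N}\cdot\frac{2s}{N} = 2s - \frac{2s}{N},
\end{equation*}
because the inner $\gamma$-integral equals $2s/N$ for every $\alpha$.

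Next I would bound the variance. Expanding $f_N^2$ produces a sum over quadruples $(i,j,k,l)\in[N]^4$ with $i\neq j$ and $k\neq l$. Integrating the product of the two indicators in $\gamma$ first yields the tent function $\max\set*{0,\,2s/N - \norm{\alpha(a_i-a_j-a_k+a_l)}}$, and a further integration in $\alpha$ gives $2s/N$ when $a_i-a_j = a_k-a_l$ and $(2s/N)^2$ otherwise (since $\alpha\mapsto\alpha h\bmod 1$ pushes Lebesgue forward to Lebesgue whenever $h$ is a nonzero integer). The first case contributes at most $(2s/N)\cdot E(A_N)/N^2$ to $\mathbb{E}[f_N^2]$, while the second reproduces the square of the mean up to a negligible correction. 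Subtracting, one obtains
\begin{equation*}
\mathrm{Var}(f_N) \ll \frac{s\,E(A_N)}{N^3}.
\end{equation*}

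By hypothesis $E(A_{N_t})/N_t^3\to 0$, so this variance tends to $0$ along $\set{N_t}$. For each fixed $s>0$, I would extract a sub-subsequence $\set{N_{t_k(s)}}$ along which $\sum_k \mathrm{Var}(f_{N_{t_k(s)}}) < \infty$; Chebyshev together with Borel--Cantelli then gives $f_{N_{t_k(s)}}\to 2s$ almost surely in $(\alpha,\gamma)$. A standard diagonal argument over a countable dense set $\set{s_j}\subset(0,\infty)$ yields a single subsequence $\set{N_{t_k}}\subset\set{N_t}$ for which $f_{N_{t_k}}\to 2s_j$ almost surely, simultaneously for every $j$. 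Since $s\mapsto f_N(\alpha,\gamma)$ is nondecreasing and the target $2s$ is continuous in $s$, a sandwich argument at each $s\in(0,\infty)$ upgrades the conclusion to all $s>0$ on a single full-measure set of $(\alpha,\gamma)\in\TT^2$, which is exactly DMPPC of $\calA$ along $\set{N_{t_k}}$.

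The technical heart is the variance bound, and the reason it is clean is the double averaging: integrating first in $\gamma$ replaces the characteristic function of a short arc by a tent function, which then integrates transparently in $\alpha$ whenever the coefficient $a_i-a_j-a_k+a_l$ is nonzero. In the purely homogeneous (singly metric) setting one lacks this first averaging step and must instead control weighted sums of $\mathbf{1}_{\set{\norm{\alpha h}\leq 2s/N}}$ directly, which is why results like that of Bloom and Walker demand substantially stronger decay of $E(A_N)$. Everything after the variance bound is a routine combination of Chebyshev, Borel--Cantelli, and diagonalization.
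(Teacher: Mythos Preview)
Your proposal is correct and follows essentially the same second-moment/Chebyshev/Borel--Cantelli route as the paper (which packages the variance computation as pairwise independence of the indicators $\bone_{d,\eps}$, Lemma~\ref{lem:pairwise}, arriving at the identical bound $\sigma^2(F(s,N,\calA))\ll s\,E(A_N)N^{-3}$). One simplification worth noting: because the variance bound is linear in $s$, the diagonal argument is unnecessary---a single sub-subsequence chosen so that $\sum_k E(A_{N_{t_k}})N_{t_k}^{-3}<\infty$ already makes $\sum_k\mathrm{Var}(f_{N_{t_k}})$ converge for \emph{every} $s>0$ simultaneously, which is exactly how the paper proceeds.
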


In particular, Theorem~\ref{thm:subsequenceofsubsequence} implies that
if $E(A_N)=o(N^3)$, then any integer sequence has a
subsequence along which $\calA$ has DMPPC. For example, since we have
$E(A_N) \asymp N^3(\log N)^{-1}$ when $\calA$ is the set of prime
numbers, Theorem~\ref{thm:subsequenceofsubsequence} leads immediately
to the following.

\begin{corollary}\label{cor:primes}
  Every increasing integer sequence has a subsequence
  along which the primes have doubly metric Poissonian pair correlations.
\end{corollary}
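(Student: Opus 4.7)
The plan is to apply Theorem~\ref{thm:subsequenceofsubsequence} directly to the set $\calA$ of primes; the only hypothesis to be verified is that $N_t^{-3} E(A_{N_t}) \to 0$ along whatever increasing integer sequence $\set{N_t}$ we are handed.

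To that end, I would first record the classical estimate $E(A_N) \asymp N^3/\log N$ for the set of primes, which the paper has already noted in its discussion of Walker's non-MPPC result. This bound is a standard consequence of the Hardy--Littlewood circle method: the number of representations of an integer $n$ as a sum of two primes in $A_N$ is governed by the usual singular series asymptotic, and summing the squares of these representation counts over all $n$ delivers the $N^3/\log N$ order of magnitude.

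With this estimate in hand, the argument is immediate. Since $\set{N_t}$ is an increasing sequence of natural numbers, $N_t \to \infty$, and therefore
$$N_t^{-3} E(A_{N_t}) \asymp \frac{1}{\log N_t} \longrightarrow 0 \qquad \text{as } t\to\infty.$$
Theorem~\ref{thm:subsequenceofsubsequence} then supplies a subsequence of $\set{N_t}$ along which the primes have DMPPC. There is no real obstacle to this argument: the corollary is a direct consequence of the main theorem combined with the well-known additive energy bound for primes, so the only input beyond Theorem~\ref{thm:subsequenceofsubsequence} is a single classical number-theoretic asymptotic.
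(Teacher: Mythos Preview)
Your proposal is correct and matches the paper's approach exactly: the corollary is stated as an immediate consequence of Theorem~\ref{thm:subsequenceofsubsequence} together with the classical estimate $E(A_N)\asymp N^3(\log N)^{-1}$ for the primes, which forces $N_t^{-3}E(A_{N_t})\to 0$ along any increasing integer sequence. Your added remark on the origin of the energy bound is extra context the paper does not spell out, but the argument is the same.
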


Combining Theorem~\ref{thm:subsequenceofsubsequence} with Bourgain's
argument in~\cite{Aistleitneretaladditiveenergy} leads to the
following necessary and sufficient condition for the existence of a
sequence along which $\calA$ has DMPPC.

\begin{theorem}\label{thm:subsequence}
  For an infinite set $\calA\subset\NN$, there exists an integer
  sequence along which $\calA$ has doubly metric Poissonian pair
  correlations if and only if $\liminf_{N\to\infty} N^{-3}E(A_N)=0.$
\end{theorem}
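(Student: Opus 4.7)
Proof plan:

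The sufficiency direction is immediate from Theorem~\ref{thm:subsequenceofsubsequence}. If $\liminf_{N \to \infty} N^{-3} E(A_N) = 0$, I choose an increasing sequence $\{M_t\} \subset \NN$ with $M_t^{-3} E(A_{M_t}) \to 0$ and apply Theorem~\ref{thm:subsequenceofsubsequence} to extract a further subsequence along which $\calA$ has DMPPC.

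For necessity, assume $\calA$ has DMPPC along some increasing sequence $\{N_t\}$, so that $F(\alpha, \gamma, s, N_t, \calA) \to 2s$ for almost every $(\alpha, \gamma) \in \TT^2$ and every $s > 0$. By Fubini, there exists $\gamma \in \TT$ (in fact almost every $\gamma$) such that $F(\alpha, \gamma, s, N_t, \calA) \to 2s$ for almost every $\alpha \in \TT$ and every $s > 0$. Fix such a $\gamma$. My goal is to show this forces $\lim_{t \to \infty} N_t^{-3} E(A_{N_t}) = 0$, hence $\liminf_N N^{-3} E(A_N) = 0$.

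The plan is to adapt Bourgain's argument from the appendix of~\cite{Aistleitneretaladditiveenergy} to this fixed $\gamma$. In its original form ($\gamma = 0$), Bourgain's argument shows that if $F(\alpha, 0, s, N_t, \calA) \to 2s$ for almost every $\alpha$, then $N_t^{-3} E(A_{N_t}) \to 0$. Assuming the contrary, one exploits large additive energy to locate a structured subset of $A_{N_t}$, from which one extracts a positive-measure set of $\alpha$ on which the multiset $\{(a_i - a_j)\alpha\}_{i \neq j \leq N_t}$ concentrates near $0 \in \TT$---and hence on which $F(\alpha, 0, s, N_t, \calA)$ stays bounded away from $2s$ for infinitely many $t$, contradicting almost-sure convergence. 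Because the additive-energy input and the structure of differences are translation-invariant, the same reasoning with $\|(a_i - a_j)\alpha - \gamma\|$ in place of $\|(a_i - a_j)\alpha\|$ throughout yields the analogous conclusion for the fixed $\gamma$ above, completing the argument.

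The main obstacle is verifying that Bourgain's concentration construction extends uniformly over $\gamma$. The combinatorial core---that large additive energy forces structured subsets of $\calA$---is unchanged, but the Diophantine selection of $\alpha$ so that scaled differences $d\alpha$ cluster near a prescribed $\gamma \in \TT$ (rather than near $0$) requires adjusting Bourgain's choice of rational approximations of $\alpha$. I expect this to be a routine modification rather than a new idea, but it is where the bulk of the technical work lies.
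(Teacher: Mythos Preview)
Your plan for both directions is essentially correct and matches the paper's strategy: sufficiency via Theorem~\ref{thm:subsequenceofsubsequence}, necessity via an inhomogeneous version of Bourgain's argument. However, your description of Bourgain's argument and your identification of the ``main obstacle'' are off, and this is worth correcting before you write the proof out.

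Bourgain's argument does \emph{not} involve rational approximations of $\alpha$ or any Diophantine selection. The combinatorial input is the Balog--Szemer\'edi--Gowers lemma: from $E(A_{N_t})\geq cN_t^3$ one extracts $B_{N_t}\subset A_{N_t}$ with $\#B_{N_t}\geq c_1 N_t$ and $\#(B_{N_t}-B_{N_t})\leq c_2 N_t$. One then sets
\[
\Omega_t^\gamma=\set[\big]{\alpha\in\TT:\norm{n\alpha-\gamma}\leq s/N_t\ \text{for some}\ n\in B_{N_t}-B_{N_t}},
\]
and the only measure-theoretic fact needed is that for each nonzero $n$ the set $\set{\alpha:\norm{n\alpha-\gamma}\leq s/N_t}$ has Lebesgue measure $2s/N_t$, which holds for \emph{every} $\gamma$ by measure-preservation of $\alpha\mapsto n\alpha$. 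Hence $\Leb(\Omega_t^\gamma)\leq 2sc_2$, and on its complement no pair from $B_{N_t}\times B_{N_t}$ contributes to $F$, so $\int_{\TT\setminus\Omega_t^\gamma}F\leq 2s(1-c_1^2)$. This produces a positive-measure set of $\alpha$ on which $F(\alpha,\gamma,s,N_t)$ is bounded away from $2s$ infinitely often. There is nothing to adjust: the argument is genuinely insensitive to the value of $\gamma$, and your anticipated technical work does not exist.

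Two minor remarks. First, the paper avoids your Fubini step by working directly in $\TT^2$: the set $\Omega_t=\set{(\alpha,\gamma):\norm{n\alpha-\gamma}\leq s/N_t\ \text{for some}\ n\in B_{N_t}-B_{N_t}}$ satisfies $\Leb(\Omega_t)\leq 2sc_2$ for the same reason, and one obtains a positive-measure subset of $\TT^2$ where the limit fails. Your route via a fixed $\gamma$ is fine but slightly less direct. Second, the contrapositive you actually obtain is that $\liminf_t N_t^{-3}E(A_{N_t})=0$, not $\lim_t=0$; this suffices since $\liminf_N\leq\liminf_t$.
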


It would be interesting to know whether
Theorems~\ref{thm:subsequenceofsubsequence} and~\ref{thm:subsequence}
also hold in the original homogeneous setting. For example, recall
that Walker proved that the primes do not have
MPPC~\cite{Walker}. Might it be the case that every increasing integer
sequence has a subsequence along which~\eqref{eq:dmppc} holds for
almost every $\alpha \in \TT$ and with $\gamma=0$?  That is, does
Corollary~\ref{cor:primes} hold for MPPC? Or should we take
Corollary~\ref{cor:primes} as evidence that the primes actually have
DMPPC?


\

Finally, It is known that having Poissonian pair correlations is a
stronger condition than being
equidistributed~\cite{AistleitnerLachmannPausinger, GrepstadLarcher,
  HKLSUPPC, Marklof2019}. We show that the same is true
inhomogeneously.

\begin{theorem}\label{eq:uid}
  If $\mathbf x$ is a sequence of points in $\TT$ and there exists
  some $\gamma$ for which $\mathbf x$ has Poissonian pair correlations
  with inhomogeneous parameter $\gamma$, then $\mathbf x$ is
  equidistributed in $\TT$.
\end{theorem}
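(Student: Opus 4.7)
My plan is to argue by contradiction, adapting the approach of Aistleitner, Lachmann, and Pausinger~\cite{AistleitnerLachmannPausinger} from the homogeneous setting $\gamma = 0$. Suppose $\mathbf{x}$ has $\gamma$-PPC. If $\mathbf{x}$ were not equidistributed in $\TT$, then by weak-$*$ compactness there would be a subsequence $(N_j)$ along which the empirical measures $\mu_{N_j} := \frac{1}{N_j}\sum_{n \leq N_j}\delta_{x_n}$ converge to some probability measure $\mu \neq \Leb$, or equivalently an arc $I \subset \TT$ with $\frac{1}{N_j}\#\{n \leq N_j : x_n \in I\} \to \beta \neq |I|$.

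The main tool would be a pair-counting lower bound on $F(\gamma, s, N_j)$: partitioning $\TT$ into arcs $J_1, \ldots, J_m$ of common length $\lambda = s/N_j$, every pair $(i, k)$ with $x_i \in J_l$ and $x_k \in J_l - \gamma$ for some $l$ satisfies $\|x_i - x_k - \gamma\| \leq \lambda = s/N_j$ and is therefore counted in the definition of $F$. This yields
\[
F(\gamma, s, N_j) \geq \frac{1}{N_j}\sum_{l=1}^{m}M_{J_l}(N_j)\cdot M_{J_l - \gamma}(N_j),
\]
where $M_J(N) := \#\{n \leq N: x_n \in J\}$. In the homogeneous case one has $\sum_l M_{J_l}^2 \geq N_j^2/m$ by Cauchy--Schwarz, with a strict excess quantified by the deviation $\beta - |I|$; letting $s \to \infty$ then contradicts $F(0, s, N_j) \to 2s$.

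To close the argument for $\gamma \neq 0$, I would need to produce an analogous excess in the shifted cross-sum $\sum_l M_{J_l} M_{J_l - \gamma}$. One approach is to arrange the partition so that a clump in $I$ contributes simultaneously to both factors $M_{J_l}$ and $M_{J_l - \gamma}$ (namely whenever $I$ meets both $J_l$ and $J_l - \gamma$ nontrivially), and then quantify the excess as a function of $\beta - |I|$ and $\gamma$. An alternative would be to reformulate Fourier-analytically: $\gamma$-PPC implies that the Fej\'er average $\frac{1}{N^2}\sum_{k \neq 0}(1 - |k|/N)e^{-2\pi i k \gamma}|S_N(k)|^2$ tends to $0$, from which one would extract, via a Tauberian argument, the vanishing of the individual Fourier coefficients $|S_N(k)|/N$ required for Weyl equidistribution.

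The main obstacle lies precisely here: shifted cross-sums of nonnegative sequences do \emph{not} in general admit Cauchy--Schwarz-type lower bounds, and the oscillatory weights $e^{-2\pi i k \gamma}$ permit delicate cancellations that block a na\"ive Tauberian extraction. Closing the gap requires using the full strength of $\gamma$-PPC---namely $F(\gamma, s, N) \to 2s$ for \emph{every} $s > 0$---to convert the rigid linear asymptotic at every scale $s$, together with the shifted cross-correlation constraints, into the conclusion that the only admissible limit measure $\mu$ is Lebesgue.
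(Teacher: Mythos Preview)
Your proposal has a genuine gap, and you identify it yourself: the lower-bound strategy via shifted cross-sums $\sum_l M_{J_l} M_{J_l - \gamma}$ does not go through because such cross-sums admit no Cauchy--Schwarz-type lower bound, and you do not supply an argument that closes it. The Fourier-analytic alternative you sketch is likewise left open---the oscillatory weights $e^{-2\pi i k\gamma}$ genuinely obstruct a direct Tauberian extraction, and you offer no workaround. As written, the proposal is an outline of where the difficulty lies rather than a proof.

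The paper resolves the difficulty by reversing the direction of the inequality: for $\gamma \neq 0$ it bounds $F(\gamma, s, N)$ from \emph{above}, not below. From non-equidistribution one first extracts (via a short pigeonhole lemma) an arbitrarily short \emph{over}represented arc; its complement, after a rotation, becomes an arc $[0,\alpha)$ with $1-\alpha < \norm{\gamma}$ which is \emph{under}represented along a subsequence, say $\frac{1}{N}\#\{n \leq N : x_n \in [0,\alpha)\} \leq \beta < \alpha$. Binning $\TT$ into arcs of width $1/N$ with counts $X_0, \ldots, X_{N-1}$, one has $NF(\gamma, s, N) \leq \sum_i \sum_{\abs{j}\leq s} X_i X_{i + \floor{\gamma N} + j \pmod N}$, and one maximizes this quadratic form subject to $\sum_{i < \floor{N\alpha}} X_i \leq N\beta$ and $\sum_i X_i = N$. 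The geometric condition $1 - \alpha < \norm{\gamma} - s/N$ ensures that the short heavily-populated arc $[\alpha,1)$ is moved entirely off itself by the shift $\gamma$, so the form never picks up a (large)$\times$(large) term; the maximum works out to $(2s+1)N\theta$ with $\theta = \tfrac{\beta}{\alpha}\bigl(2 - \tfrac{\beta}{\alpha}\bigr) < 1$. For $s$ large enough, $(2s+1)\theta < 2s$, contradicting $\gamma$-PPC. The missing idea in your proposal is precisely this switch from a lower to an upper bound, made possible by choosing the underrepresented arc long enough that the shift by $\gamma$ separates the heavy region from itself.
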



Before moving on to the proofs, we mention a number questions and
speculations that are not pursued here.

What is the relationship between MPPC and DMPPC? Does one imply the
other? From the point of view of our results, the most optimistic hope
would be for DMPPC to imply MPPC, because then
Theorem~\ref{thm:fundyes} would imply a positive answer to the
sufficiencty part of Question~\ref{q:fq} for MPPC. 

Another question which has been asked in the homogeneous setting and
can as easily be asked in the doubly metric case is that of a zero-one
law: If $\calA\subset\NN$ does not have DMPPC, is it the case
that~\eqref{eq:dmppc} fails for \emph{almost every} pair
$(\alpha, \gamma)$? For MPPC, the question is still open, with some
progress in the work of Lachmann--Technau~\cite{LachmannTechnau} and
Larcher--Stockinger~\cite{LarcherStockinger,LarcherStockingerMaxE,LarcherStockingerNegResults}. In
fact, Larcher and Stockinger have conjectured that even more is true;
namely, when $\calA\subset\NN$ does not have MPPC, then there is
\emph{no} $(\alpha, 0)$ for which~\eqref{eq:dmppc} holds. The doubly
metric version of this conjecture would immediately show that MPPC
implies DMPPC. However, that doubly metric statement seems unlikely,
and can perhaps be disproved by examining the examples
in~\cite{AistleitnerLachmannTechnau} and showing that they do not have
DMPPC. A more likely speculation would be that for every fixed
$\gamma$, if~\eqref{eq:dmppc} does not hold for almost every $\alpha$,
then it holds for no $\alpha$.

\section{Proof of Proposition~\ref{prop:random}}\label{sec:heuristics}

First, we prove Proposition~\ref{prop:random}. The result itself is
not surprising, but we include it anyway because it motivates the
definition of DMPPC and because its proof is a template for the later
proofs, particularly Theorem~\ref{thm:fundyes}.

Note that a homogeneous version of this argument would give the
corresponding result for (homogeneous) Poissonian pair
correlations---originally established
in~\cite{AistleitnerLachmannPausinger,MarklofDistmodone}.

\begin{proof}[Proof of Proposition~\ref{prop:random}]
  We will show that for any fixed $s>0$, we almost surely get
  \begin{equation*}
    \lim_{N\to\infty} \frac{1}{N}\#\set*{1\leq i\neq j \leq N : \norm*{x_i - x_j - \gamma} \leq \frac{s}{N}} = 2s. 
  \end{equation*}
  Since $s>0$ is arbitrary, this must almost surely hold simultaneously for a
  countable dense set of values for $s>0$. An approximation argument
  will do the rest.

  Let $s>0$. For each $i\neq j$ and $N$, let $\bone_{i,j,s/N}$ denote the indicator
  random variable for
  \begin{equation*}
    \set*{(\gamma, \mathbf x) : \norm{x_i-x_j-\gamma}\leq \frac{s}{N}}.
  \end{equation*}
  Then we can see the pair correlation function as the random variable
  \begin{equation}\label{eq:11}
    F(s,N) = \frac{1}{N}\sum_{1\leq i\neq j\leq N} \bone_{i,j,s/N}.
  \end{equation}
  Our goal is to show that $\PP(F(s, N)\to 2s) =1$.

  We now show that if $(i,j)\neq (k,\ell)$ then $\bone_{i,j,\eps}$ and
  $\bone_{k,\ell, \eps'}$ are uncorrelated (hence independent since they
  are indicators). Indeed,
  \begin{align*}
    \EE\parens*{\bone_{i,j,\eps}\bone_{k,\ell,\eps'}} &= \int_\TT
                                                     \int_{\TT^4} \parens*{\sum_{n\in\ZZ}c(n) e(n(x_i - x_j -
                                                     \gamma))}\parens*{\sum_{n\in\ZZ}c'(n) e(n(x_k - x_\ell -
                                                     \gamma))}\,d(x_i, x_j,x_k,x_\ell)\,d\gamma,
                                                     \intertext{  where $c(n)$ and $c'(n)$ are the Fourier coefficients of $\bone_{[-eps,\eps] +
                                                     \ZZ}$ and
                                                     $\bone_{[-\eps',\eps']+\ZZ}$,
                                                     respectively. Continuing, }
    &= \sum_{m,n\in\ZZ}c(m) c'(n) \int_\TT
      \int_{\TT^4}  e(m(x_i - x_j -
      \gamma) + n (x_k-x_\ell-\gamma)
      \,d(x_i, x_j,x_k,x_\ell)\,d\gamma.
  \end{align*}
  The integral over $\gamma$ separates, and is only nonzero when
  $m=-n$. So we have
  \begin{equation*}
    \EE\parens*{\bone_{i,j,\eps}\bone_{k,\ell,\eps'}}=
    \sum_{n\in\ZZ}c(n) c'(-n) \int_{\TT^4}  e(n(x_i - x_j - x_k+ x_\ell))
                                                       \,d(x_i, x_j,x_k,x_\ell)
  \end{equation*}
  Since $(i,j)\neq (k,\ell)$, the integral is $0$ unless $n=0$, so we
  are left with
  \begin{equation*}
    \EE\parens*{\bone_{i,j,\eps}\bone_{k,\ell,\eps'}}= c(0) c'(0) =\EE\parens*{\bone_{i,j,\eps}}\EE\parens*{\bone_{k,\ell,\eps'}},
  \end{equation*}
  which is what we claimed.

  Notice that $\EE(\bone_{i,j,s/N}) = 2s/N$. Therefore,
  by~\eqref{eq:11}, we have
  \begin{equation*}
    \EE(F(s,N)) = \frac{1}{N}\sum_{1\leq i\neq j \leq N}\frac{2s}{N} =
    \parens*{1-\frac{1}{N}}2s,
  \end{equation*}
  and
  \begin{align}
    \sigma^2(F(s,N)) &= \frac{1}{N^2}\sum_{1\leq i\neq j \leq
                       N}\sigma^2\parens*{\bone_{i,j,s/N}} \nonumber \\
    &= \frac{1}{N^2}\sum_{1\leq i\neq j \leq N}\brackets*{\frac{2s}{N}
      - \parens*{\frac{2s}{N}}^2} \ll \frac{s}{N}. \label{eq:1/N}
  \end{align}
  In the rest of the proof we will use these last findings to show that
  the probability that $F(s,N)$ is far from its expected value is small, and that the probability
  that that happens infinitely often is $0$.

  Let $\set{s_M}_{M=1}^\infty$ be any sequence converging to
  $s$. Notice that, by~(\ref{eq:1/N}), we can write
  \begin{equation*}
    \sigma^2(F(s_M, M^2)) \ll M^{-2}
  \end{equation*}
  for all sufficiently large $M$, with an implied constant which may
  depend on $s$, but is independent of the sequence $\set{s_M}$, since
  its terms are eventually bounded uniformly away from $\infty$.
  Then, by Chebyshev's inequality,
  \begin{equation*}
    \PP\brackets*{\abs*{F(s_M,M^2) - \parens*{1 - \frac{1}{M^2}}2s_M} > \frac{1}{M^{1/4}}} < \frac{\sigma^2(F(s_M,M^2))}{1/M^{1/2}} \ll \frac{1}{M^{3/2}}
  \end{equation*}
  holds for all large $M\in\NN$. Since the sum
  $\sum M^{-3/2}$ converges, the
  Borel--Cantelli Lemma says that we almost surely have
  \begin{equation*}
    \abs*{F(\gamma, s_M,M^2, \mathbf x) - \parens*{1 - \frac{1}{M^2}}2s_M} \leq \frac{1}{M^{1/4}}
  \end{equation*}
  for all sufficiently large $M$, which implies that we almost surely
  have $F(\gamma, s_M,M^2, \mathbf x) \to 2s$ as $M\to\infty$. Now, notice that for any
  integer $N$ such that $M^2 \leq N \leq (M+1)^2$, we have
  \begin{equation*}
    \frac{M^2}{(M+1)^2} F\parens*{\gamma, s\frac{M^2}{(M+1)^2}, M^2, \mathbf x} \leq F(\gamma, s, N, \mathbf x) \leq \frac{(M+1)^2}{M^2} F\parens*{\gamma, \frac{(M+1)^2}{M^2}s, (M+1)^2, \mathbf x}.
  \end{equation*}
  The left-most and right-most members almost surely converge to $2s$ as $M$
  increases, therefore, almost surely, $F(\gamma, s,N, \mathbf x) \to 2s$, as we wanted.
\end{proof}

\section{Proof of Theorem~\ref{thm:fundyes}}
\label{sec:proof-theor-refthm:f}

  For $\calA\subset\NN$, $s>0$, $N\in\NN$, let us regard $F(s,N,
  \calA)$ as a measurable function on the  space $\TT^2$ equipped with
  Lebesgue measure. Specifically, it is the function
  \begin{equation*}
    F(s,N,\calA) = \frac{1}{N} \sum_{\substack{(a,b)\in A^2\\ a\neq
        b}} \bone_{(a-b),s/N},
  \end{equation*}
  where, for $d\in \ZZ$ and $\eps>0$, we use $\bone_{d,\eps}$ to
  denote the indicator of the set
  \begin{equation*}
    \set*{(\alpha, \gamma)\in\TT^2 :
      \norm{d\alpha-\gamma} \leq \eps}. 
  \end{equation*}
  Given $d\in \ZZ$, let
  \begin{equation*}
    r_A(d) = \#\set*{(a,b)\in A^2 : a-b = d}
  \end{equation*}
  be the number of ways to represent $d$ as a difference of two
  elements of $A$. In particular, $r(d)$ is non-zero only if
  $d\in A-A$. It is a simple exercise to verify the following:
  \begin{align}
    \sum_{d\in\ZZ} r_A(d) &= N^2 \label{eq:8} \\
    \sum_{d\in\ZZ} r_A(d)^2 &= E(A) \label{eq:9} \\
    F(s,N,\calA) &= \frac{1}{N} \sum_{d\in\ZZ\setminus \set{0}} r_A(d) \bone_{d,s/N}. \label{eq:5}
  \end{align}
  The next lemma shows that the functions $\bone_{d,s/N}$ are
  pairwise independent as random variables on $\TT^2$.

  \begin{lemma}\label{lem:pairwise}
    For any sequence $(\eps_d)_{d\in\ZZ}$ of positive reals, the
    associated random variables $(\bone_{d,\eps_d})_{d\in\ZZ}$ are pairwise
    independent. 
  \end{lemma}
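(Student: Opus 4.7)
The plan is to deduce pairwise independence from the easier property that any two distinct indicators are uncorrelated, which suffices since they are $\{0,1\}$-valued. To set up the computation, I would first record that $\EE(\bone_{d,\eps}) = 2\eps$ for every $d\in\ZZ$ and $\eps>0$. When $d=0$ this is immediate from Fubini, because the set is simply $\TT\times\{\gamma:\norm{\gamma}\leq\eps\}$. When $d\neq 0$, I would fix $\gamma$ and note that the $\alpha$-section $\{\alpha:\norm{d\alpha-\gamma}\leq\eps\}$ has measure $2\eps$ by translation invariance and the fact that $\alpha\mapsto d\alpha$ preserves Lebesgue measure on $\TT$; Fubini then gives $2\eps$ for the full integral.

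For the main step, given $d_1\neq d_2$ and $\eps_1,\eps_2>0$, I would show that
\begin{equation*}
  \EE(\bone_{d_1,\eps_1}\bone_{d_2,\eps_2}) = 4\eps_1\eps_2
\end{equation*}
by mirroring the Fourier calculation in the proof of Proposition~\ref{prop:random}. Writing each indicator as
\begin{equation*}
  \bone_{d,\eps}(\alpha,\gamma) = \sum_{n\in\ZZ} c_n(\eps)\, e(n(d\alpha-\gamma))
\end{equation*}
with $c_0(\eps)=2\eps$, multiplying the two expansions, and integrating over $(\alpha,\gamma)\in\TT^2$, the $\gamma$-integral forces the Fourier indices $m,n$ to satisfy $m+n=0$. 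The remaining $\alpha$-integral then forces $m(d_1-d_2)=0$, which, because $d_1\neq d_2$, forces $m=0$. Only the constant term $c_0(\eps_1)c_0(\eps_2) = 4\eps_1\eps_2$ survives, matching $\EE(\bone_{d_1,\eps_1})\EE(\bone_{d_2,\eps_2})$.

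As a more elementary alternative, I would use the measure-preserving change of variables $(\alpha,\gamma)\mapsto(\alpha,\gamma+d_1\alpha)$, which rewrites the joint event as $\{\norm{\gamma}\leq\eps_1\}\cap\{\norm{(d_2-d_1)\alpha-\gamma}\leq\eps_2\}$. Because $d_2-d_1\neq 0$, Fubini and the measure invariance of $\alpha\mapsto(d_2-d_1)\alpha$ immediately give the product $(2\eps_1)(2\eps_2)$.

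The only real obstacle is the familiar technical point that the Fourier series of the indicator of an arc converges only conditionally, so interchanging the summation with the integration is not automatic. I would handle this either by first verifying the orthogonality relations with smooth (say, Fej\'er) approximations to $\bone_{[-\eps,\eps]+\ZZ}$ and passing to the limit via dominated convergence, or by simply using the change-of-variables argument above, which bypasses all Fourier-analytic subtleties.
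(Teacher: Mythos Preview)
Your main argument---expand each indicator in its Fourier series, let the $\gamma$-integral force $m+n=0$, then let the $\alpha$-integral force $n(d_1-d_2)=0$ and hence $n=0$---is exactly the paper's proof. Your change-of-variables alternative is a welcome addition and arguably cleaner, since it sidesteps the summation/integration interchange issue you correctly flag but the paper silently passes over.
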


  \begin{proof}
    Let $\bone_{d_1, \eps_1}, \bone_{d_2, \eps_2}$ be any pair of
    distinct random variables from the sequence. We must show that
    \begin{equation}\label{eq:4}
      \int_{\TT^2} \bone_{d_1, \eps_1}\bone_{d_2, \eps_2} =
      \int_{\TT^2}\bone_{d_1, \eps_1}\int_{\TT^2}\bone_{d_2, \eps_2}. 
    \end{equation}
    We rewrite the left-hand side as
        \begin{align}
          \int_{\TT^2} \bone_{d_1, \eps_1} \bone_{d_2, \eps_2} &=
                                                          \int_{\TT^2}
                                                          \bone_{(-\eps_1, \eps_1) + \ZZ} (d\alpha - \gamma) \bone_{(-\eps_2, \eps_2) + \ZZ} (d\alpha - \gamma)\,d\alpha\,d\gamma
          \nonumber \\
                                                        &= \int_{\TT^2}
                                                          \parens*{
                                                          \sum_{n_1\in\ZZ}c_1(n_1)e(n_1(d_1\alpha
                                                          -
                                                          \gamma))}\parens*{
                                                          \sum_{n_2\in\ZZ}c_2(n_2)e(n_2(d_2\alpha
                                                          -
                                                          \gamma))}\,d\alpha\,d\gamma,\label{eq:3}
        \end{align}
        where $(c_i(n))_{n\in\ZZ}$ are the Fourier coefficients of
        $\bone_{(-\eps_i, \eps_i) + \ZZ}$. We may rewrite~\eqref{eq:3}
        as
        \begin{equation*}
          \sum_{(n_1,n_2) \in
            \ZZ^2} c_1(n_1)c_2(n_2) \int_\TT e\parens*{(d_1n_1 + d_2 n_2) \alpha}\,d\alpha\underbrace{\int_\TT e\parens*{- (n_1 + n_2)\gamma}\,d\gamma}.
        \end{equation*}
        The indicated integral is only nonzero if $n_1 + n_2 = 0$, so
        the expression becomes
        \begin{equation*}
          \sum_{n \in
            \ZZ} c_1(n)c_2(-n) \underbrace{\int_\TT e\parens*{n(d_1 -
              d_2) \alpha}\,d\alpha}.
        \end{equation*}
        Since $d_1\neq d_2$, the newly indicated integral is nonzero
        only if $n=0$, in which case it is $1$, so the expression
        becomes $c_1(0)c_2(0)$, which is exactly the right-hand side
        of~\eqref{eq:4}. 
  \end{proof}

  \begin{lemma}
  For any infinite set $\calA\subset \NN$, and for every $s>0$ and sufficiently large $N$, we have
  \begin{equation*}
    \int_{\TT^2} F(s, N, \calA) = \frac{2(N-1)}{N}s.
  \end{equation*}
\end{lemma}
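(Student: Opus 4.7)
The plan is a direct computation using the decomposition~\eqref{eq:5} of $F(s,N,\calA)$ together with the elementary sums recorded in~\eqref{eq:8}--\eqref{eq:9}; independence is not needed here, only linearity of the integral.

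First I would apply~\eqref{eq:5} and pull the integral inside the finite sum to get
\begin{equation*}
  \int_{\TT^2} F(s,N,\calA) = \frac{1}{N}\sum_{d\in\ZZ\setminus\set{0}} r_A(d) \int_{\TT^2} \bone_{d,s/N}.
\end{equation*}
The main point is then to compute $\int_{\TT^2} \bone_{d,s/N}$ for each nonzero $d$. Fixing $\alpha\in\TT$, the set of $\gamma\in\TT$ with $\norm{d\alpha-\gamma}\leq s/N$ is just an arc of length $2s/N$ centered at $d\alpha$ (interpreted mod $1$), provided $2s/N \leq 1$, i.e.\ $N\geq 2s$. Thus, by Fubini,
\begin{equation*}
  \int_{\TT^2}\bone_{d,s/N} = \int_\TT \frac{2s}{N}\,d\alpha = \frac{2s}{N}
\end{equation*}
for every $d\neq 0$, as soon as $N$ is large enough (depending on $s$).

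Finally I would use the observation that $r_A(0)=N$ (since $a-b=0$ forces $a=b$ and $\abs{A}=N$), together with~\eqref{eq:8}, to conclude
\begin{equation*}
  \sum_{d\in\ZZ\setminus\set{0}} r_A(d) = \sum_{d\in\ZZ}r_A(d) - r_A(0) = N^2 - N.
\end{equation*}
Substituting back yields
\begin{equation*}
  \int_{\TT^2} F(s,N,\calA) = \frac{1}{N}\cdot(N^2-N)\cdot\frac{2s}{N} = \frac{2(N-1)}{N}s,
\end{equation*}
as required. There is no genuine obstacle; the only mild subtlety is ensuring $N$ is large enough (namely $N\geq 2s$) for the arc not to wrap around, which is exactly the ``sufficiently large $N$'' hypothesis in the statement.
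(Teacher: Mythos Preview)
Your proposal is correct and follows essentially the same approach as the paper's proof: both use the decomposition~\eqref{eq:5}, the fact that $\int_{\TT^2}\bone_{d,s/N}=2s/N$ once $N\geq 2s$, and the identity $\sum_{d\neq 0}r_A(d)=N^2-N$ coming from~\eqref{eq:8}. You have simply spelled out the Fubini step and the $r_A(0)=N$ observation a bit more explicitly than the paper does.
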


\begin{proof}
  As we have seen in~\eqref{eq:5}, $F(s,N,\calA)$ is a linear
  combination of random variables of the form $\bone_{d,s/N}$ where
  only $d$ varies. Notice that
  \begin{equation*}
    \int_{\TT^2}\bone_{d,s/N} = \frac{2s}{N}
  \end{equation*}
  as long as $(2s)/N \leq 1$. Therefore, by~\eqref{eq:5}, we have
  \begin{equation*}
    \int F(s,N,\calA) = \frac{2s}{N^2} \sum_{d\in\ZZ\setminus\set{0}} r_A(d) \overset{\eqref{eq:8}}{=}
    \frac{2s}{N^2}(N^2-N) = \frac{2(N-1)}{N}s,
  \end{equation*}
  as claimed.
\end{proof}

\begin{lemma}\label{lem:variance}
  For any infinite subset $\calA\subset\NN$,  $N\in\NN$, and $s>0$, we have
  \begin{equation*}
    \sigma^2(F(s, N, \calA)) \ll E(A) N^{-3} s,
  \end{equation*}
  where $\sigma^2$ denotes variance.
\end{lemma}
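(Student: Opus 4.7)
The plan is to expand the variance using the explicit formula $F(s,N,\calA) = \frac{1}{N}\sum_{d\in\ZZ\setminus\{0\}} r_A(d)\,\bone_{d,s/N}$ from~\eqref{eq:5} and exploit the pairwise independence of the indicators $\bone_{d,s/N}$ established in Lemma~\ref{lem:pairwise}.

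First, I would note that since the $\bone_{d,s/N}$ are pairwise independent (hence pairwise uncorrelated), the variance of a linear combination $\frac{1}{N}\sum_{d\neq 0} r_A(d)\,\bone_{d,s/N}$ decomposes as a weighted sum of individual variances:
\begin{equation*}
  \sigma^2(F(s,N,\calA)) = \frac{1}{N^2} \sum_{d\in\ZZ\setminus\{0\}} r_A(d)^2\,\sigma^2(\bone_{d,s/N}).
\end{equation*}
This is the key place where Lemma~\ref{lem:pairwise} enters; without it one would need to control cross terms.

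Next, I would bound the individual variances. Each $\bone_{d,s/N}$ is an indicator function whose expectation on $\TT^2$ equals $2s/N$ (for $N$ large enough that $2s/N \leq 1$, as computed in the previous lemma). Therefore
\begin{equation*}
  \sigma^2(\bone_{d,s/N}) = \frac{2s}{N}\parens*{1 - \frac{2s}{N}} \leq \frac{2s}{N}.
\end{equation*}
Plugging this bound into the variance decomposition gives
\begin{equation*}
  \sigma^2(F(s,N,\calA)) \leq \frac{1}{N^2} \cdot \frac{2s}{N} \sum_{d\in\ZZ\setminus\{0\}} r_A(d)^2 \leq \frac{2s}{N^3}\,E(A),
\end{equation*}
where the last step applies the identity~\eqref{eq:9}. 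This yields the claimed bound $\sigma^2(F(s,N,\calA)) \ll E(A) N^{-3} s$.

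There is no real obstacle here; the substantive content is already in Lemma~\ref{lem:pairwise}, and the remaining steps are bookkeeping. The only mildly subtle point is remembering that pairwise (rather than full mutual) independence is enough for the variance to equal the sum of the variances, which is exactly the structure Lemma~\ref{lem:pairwise} provides.
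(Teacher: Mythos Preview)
Your proposal is correct and follows essentially the same argument as the paper: decompose the variance via pairwise independence (Lemma~\ref{lem:pairwise}), bound each $\sigma^2(\bone_{d,s/N})$ by $2s/N$, and invoke~\eqref{eq:9}. The paper additionally notes that when $N\leq 2s$ the indicators are identically~$1$ so their variance vanishes, but this is a cosmetic point since the $\ll$ absorbs the finitely many small~$N$.
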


\begin{proof}
We have found in Lemma~\ref{lem:pairwise} that the random variables
$\bone_{d,s/N}$ are pairwise independent. Therefore,
\begin{equation*}
  \sigma^2(F(s, N, \calA)) = \frac{1}{N^2} \sum_{d\in\ZZ\setminus\set{0}} r_A(d)^2 \sigma^2(\bone_{d,s/N}).
\end{equation*}
(The reader should keep in mind that this is really a finite sum,
since $r(d)$ is nonzero for only finitely many $d$.) Now, 
\begin{equation*}
  \sigma^2(\bone_{d,s/N}) = \int \bone_{d,s/N}^2- \parens*{\int
    \bone_{d,s/N}}^2 =
  \begin{cases}
    \frac{2s}{N} - \parens*{\frac{2s}{N}}^2 &\textrm{if } N\geq 2s \\
    0 &\textrm{if } N \leq 2s. 
  \end{cases}
\end{equation*}
Combining, we find
\begin{equation*}
  \sigma^2(F(s,N,\calA)) \leq \frac{2s}{N^3}
  \sum_{d\in\ZZ}r(d)^2 \overset{\eqref{eq:9}}{=} 2E(A) N^{-3}s,
\end{equation*}
which proves the lemma.
\end{proof}

Now we can state the proof of Theorem~\ref{thm:fundyes}. 

\begin{proof}[Proof of Theorem~\ref{thm:fundyes}]
  Let $\eps, s >0$. We have $E(A_N) \leq N^3\psi(N)$ where
  $\psi:\NN\to [0,1]$ is a weakly decreasing function such that
  $\sum \psi(N)/N$ converges. Therefore, for any fixed real number
  $k>1$, $\sum\psi(\floor{k^t})$ converges. In particular, by
  Lemma~\ref{lem:variance}, $\sum\sigma^2(F(s,\floor{k^t}, \calA))$
  converges. Meanwhile, Chebyshev's inequality says that the measure
  of the set of $(\alpha, \gamma)\in\TT$ for which
  \begin{equation*}
    \abs*{F(\alpha, \gamma, s, \floor{k^t}, \calA) - 2s\parens*{\frac{\floor{k^t}-1}{\floor{k^t}}}} \geq \eps
  \end{equation*}
  is no more than $\eps^{-2}\sigma^2(F(s,\floor{k^t}, \calA))$. Since
  $\sum\eps^{-2}\sigma^2(F(s,\floor{k^t}, \calA))$ converges, the Borel--Cantelli
  lemma tells us that for almost every $(\alpha, \gamma)$, there are at
  most finitely many $t$ for which
  \begin{equation*}
    \abs*{F(\alpha, \gamma, s, \floor{k^t}, \calA) - 2s\parens*{\frac{\floor{k^t}-1}{\floor{k^t}}}} < \eps
  \end{equation*}
  does not hold. Decreasing $\eps$ to $0$ along a positive real sequence, we
  conclude that for almost every $(\alpha, \gamma)$, 
  \begin{equation*}
    F(\alpha, \gamma, s, \floor{k^t}, \calA) \to 2s
  \end{equation*}
  as $t\to\infty$. 

  By repeating the argument of the previous paragraph for every rational
  multiple of $s$, we may say that for almost every $(\alpha, \gamma)$,
  the following holds:
  \begin{equation}\label{eq:1}
    (\forall r\in\QQ_{>0})\qquad F(\alpha, \gamma, r s,
    \floor{k^t},\calA) \to 2 r s \quad (t\to\infty).
  \end{equation}
  A review of the definitions reveals that if $k^t < N \leq k^{t+1}$, then
  \begin{equation*}
    \frac{\floor{k^t}}{N} F\parens*{\alpha, \gamma, s \frac{\floor{k^t}}{N}, \floor{k^t}} \leq F(\alpha,
    \gamma, s, N) \leq \frac{\floor{k^{t+1}}}{N} F\parens*{\alpha, \gamma, s \frac{\floor{k^{t+1}}}{N}, \floor{k^{t+1}}}.
  \end{equation*}
  From this, we find that
  \begin{equation*}
    \frac{1}{k}\parens*{1 - \frac{1}{k^t}} F\parens*{\alpha, \gamma, s/k \parens*{1 - \frac{1}{k^t}}, \floor{k^t}} \leq F(\alpha,
    \gamma, s, N) \leq  k F\parens*{\alpha, \gamma, sk ,
      \floor{k^{t+1}}}. 
  \end{equation*}
  Now, let $\delta>0$ be a rational number. For $t$ sufficiently
  large, we have
  \begin{equation*}
    1-\delta < \parens*{1 - \frac{1}{k^t}}  < 1, 
  \end{equation*}
  hence
  \begin{equation*}
    \frac{1}{k}\parens*{1 - \delta} F\parens*{\alpha, \gamma, (s/k) \parens*{1 - \delta}, \floor{k^t}} \leq F(\alpha,
    \gamma, s, N) \leq  k F\parens*{\alpha, \gamma, sk ,
      \floor{k^{t+1}}}
  \end{equation*}
  holds if $N$ is large enough.  Now if $k>1$ is rational, we may
  apply~\eqref{eq:1} to show that for almost every $(\alpha, \gamma)$,
  \begin{equation*}
    \frac{2s}{k^2} (1-\delta)^3 \leq F(\alpha, \gamma, s, N) \leq 2s k^2 (1+\delta)
  \end{equation*}
  for all sufficiently large $N$.  By taking $k \downarrow 1$ and
  $\delta \downarrow 0$ along sequences of rational numbers, and intersecting the
  corresponding full-measure sets of $(\alpha,\gamma)$, we see that
  for almost all $(\alpha, \gamma)$,
  \begin{equation}\label{eq:above}
    F(\alpha, \gamma, s, N)\to 2s, \quad N\to \infty. 
  \end{equation}
  Finally, by intersecting countably many full-measure subsets of
  $\TT^2$, we can say that for almost all $(\alpha, \gamma)$, and all
  rational $s>0$, the limit~(\ref{eq:above}) holds. This is enough to
  conclude that it holds for all $s$.
\end{proof}

\section{Proof of Theorem~\ref{thm:div}}
\label{sec:proof-diverg-part}

The proof of Theorem~\ref{thm:div} is based on Bourgain's construction
in~\cite{Aistleitneretaladditiveenergy}. We require a couple of
lemmas. The first lemma is a simple consequence of the fact that
circle expanding maps are mixing. It will allow us to define a
sequence $(U_N)_N$ of sets which are quasi-independent, meaning that
there is some constant $C>1$ such that $\Leb(U_M\cap U_N)\leq
C\Leb(U_M)\Leb(U_N)$ whenever $M\neq N$. 

\begin{lemma}\label{lem:deltas} 
For any sequence  of measurable sets $Q_N\in \TT$  it is possible to choose a sequence
of positive integers $\Delta_N$ so that the sets
  \begin{equation*}
    R_N = \set*{\alpha\in\TT : \Delta_N\alpha \in Q_N}
  \end{equation*}
  are pairwise quasi-independent.
\end{lemma}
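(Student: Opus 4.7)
The plan is to choose the $\Delta_N$ inductively, making each $\Delta_N$ divisible by all of its predecessors and large enough that the asymptotic mixing of the multiplication-by-$k$ map on $\TT$ takes effect.

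The key input is the mixing fact: for any measurable $Q, Q' \subset \TT$,
\begin{equation*}
  \lim_{k\to\infty} \Leb\set*{\alpha\in\TT : \alpha\in Q,\ k\alpha\in Q'} = \Leb(Q)\Leb(Q').
\end{equation*}
This is a standard Fourier calculation: expanding $\bone_Q$ and $\bone_{Q'}$ in Fourier series, the integral of the product becomes a sum $\sum_{n\in\ZZ} \hat{\bone}_Q(-nk)\,\hat{\bone}_{Q'}(n)$; the $n=0$ term is exactly $\Leb(Q)\Leb(Q')$, the high-$|n|$ tail is controlled uniformly in $k$ by Parseval and Cauchy--Schwarz, and each fixed low frequency $n\neq 0$ decays in $k$ by Riemann--Lebesgue.

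Granting this, I would set $\Delta_1 := 1$ and, given $\Delta_1,\dots,\Delta_{N-1}$, let $L_N := \mathrm{lcm}(\Delta_1,\dots,\Delta_{N-1})$ and take $\Delta_N = k_N L_N$ for a positive integer $k_N$ to be chosen. Then for each $M<N$ the ratio $m_{M,N} := \Delta_N/\Delta_M$ is a positive integer, and the measure-preserving substitution $\beta = \Delta_M\alpha \pmod 1$ yields
\begin{equation*}
  \Leb(R_M\cap R_N) = \Leb\set*{\beta\in Q_M : m_{M,N}\beta\in Q_N}.
\end{equation*}
As $k_N\to\infty$, each $m_{M,N}\to\infty$, so by the mixing fact this quantity converges to $\Leb(Q_M)\Leb(Q_N) = \Leb(R_M)\Leb(R_N)$. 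Since only finitely many $M<N$ must be controlled at stage $N$, I can pick $k_N$ large enough to ensure $\Leb(R_M\cap R_N)\leq 2\,\Leb(R_M)\Leb(R_N)$ for every such $M$, producing pairwise quasi-independence with $C=2$.

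The only wrinkle, rather than a genuine obstacle, is that indices $M$ with $\Leb(Q_M)=0$ give $\Leb(R_M)=0$ and automatically satisfy the quasi-independence condition; those pairs can simply be ignored when picking $k_N$. The main work of the proof is really the standard mixing lemma, and the inductive choice of $k_N$ is then essentially automatic.
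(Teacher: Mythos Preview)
Your argument is correct and rests on the same idea as the paper's proof---the mixing of circle expanding maps lets you inductively choose each $\Delta_N$ large enough to decouple $R_N$ from the finitely many earlier $R_M$. The only difference is in implementation: the paper fixes a single integer $m\geq 2$, takes every $\Delta_N$ to be a power $m^{k_N}$, and applies the standard mixing of the single system $f_m$ directly to the pair $(Q_N, R_M)$, so no substitution or divisibility bookkeeping is needed. Your route via $\Delta_N = k_N\,\mathrm{lcm}(\Delta_1,\dots,\Delta_{N-1})$ and the change of variables $\beta = \Delta_M\alpha$ works just as well, but it forces you to prove the slightly more general mixing statement (limit over all $k$ rather than iterates of a fixed $f_m$); the paper's choice sidesteps that and keeps the argument a line shorter.
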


\begin{proof}
Let $m\geq 2$ be an integer. Recall that the map $f_m: \TT\to\TT$ defined
by $f_m(\alpha) = m\alpha (\bmod 1)$ is measure-preserving and
mixing, meaning that for any measurable sets $S, T\subset\TT$ we have
that $\Leb(f_m^{-1}(S)) = \Leb(S)$ and also that
\begin{equation*}
  \lim_{k\to\infty} \Leb(f_m^{-k}(S)\cap T) = \Leb(S)\Leb(T). 
\end{equation*}
Notice that $R_N = f_{\Delta_N}^{-1}(Q_N)$.

We may put $\Delta_1=1$, and for each $N$, choose $\Delta_N$ to be a
large enough power $m^{k_N}$ that
\begin{equation*}
  \Leb(f_{\Delta_N}^{-1}(Q_N)\cap R_M) =
  \Leb(f_{m}^{-k_N}(Q_N)\cap R_M) \leq 2\Leb(Q_N)\Leb(R_M) = 2\Leb(R_N)\Leb(R_M)
\end{equation*}
holds for all $M=1, \dots, N-1$. 
\end{proof}

The next lemma is a measure estimate on the set of $(\alpha,\gamma)$
which simultaneously satisfy an inhomogeneous approximation condition
and a homogeneous approximation condition on $\alpha$.

\begin{lemma}\label{lem:farey}
  For any $M\in\NN$ and $0 \leq \sigma,\tau \leq 1/2$, let
  \begin{equation*}
    S = \set*{(\alpha,\gamma)\in\TT^2 : \abs*{\alpha - \frac{a}{d}}\leq \frac{\sigma}{M^2} \quad\textrm{for
        some}\quad d\in[M]\quad\textrm{and}\quad (a,d)=1},
  \end{equation*}
  and 
  \begin{equation*}
    T = \set*{(\alpha, \gamma) \in\TT^2: \norm{d\alpha -
        \gamma}\leq \frac{\tau}{M}\quad\textrm{for some}\quad d\in[M]}.
  \end{equation*}
  Then $\Leb(S\cap T) \gg \sigma\tau$. 
\end{lemma}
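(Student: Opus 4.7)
My approach is to exploit the fact that the set $S$ depends only on the $\alpha$-coordinate: writing $S = S' \times \TT$, one has
\[
  \Leb(S \cap T) = \int_{S'} \Leb(T_\alpha)\, d\alpha,
\]
where $T_\alpha \subset \TT$ denotes the $\gamma$-slice at $\alpha$. For $\sigma \leq 1/2$, the set $S'$ is a disjoint union of Farey intervals $I(a,d) := [a/d - \sigma/M^2,\, a/d + \sigma/M^2]$ (disjointness follows from the standard fact that distinct Farey fractions of order $M$ differ by at least $1/M^2$), and $\Leb(S') \asymp \sigma$ by the classical asymptotic $\sum_{d \leq M} \varphi(d) \asymp M^2$.

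To lower-bound $\Leb(T_\alpha)$, I would restrict attention to $\alpha \in I(a,d)$ with $d$ in a middle range, say $M/4 < d \leq M/2$. The key observation is that since $(a,d) = 1$, multiplication by $a$ is a bijection of $\ZZ/d\ZZ$; so for each residue $j \in \{0, 1, \ldots, d-1\}$ there is a unique $d'_j \in \{1, \ldots, d\} \subseteq [M]$ with $d'_j a \equiv j \pmod d$. Writing $\alpha = a/d + \eta$ with $|\eta| \leq \sigma/M^2$, one computes $d'_j \alpha \equiv j/d + d'_j \eta \pmod 1$, so $d'_j \alpha$ lies within $d|\eta| \leq \sigma/(2M)$ of the lattice point $j/d$. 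Hence $T_\alpha$ contains the $d$ intervals $[d'_j\alpha - \tau/M,\, d'_j\alpha + \tau/M]$, each contained in a neighborhood of $j/d$ of radius $\sigma/(2M) + \tau/M$.

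For $d \leq M/2$, the lattice spacing $1/d \geq 2/M$ exceeds $(\sigma + 2\tau)/M$ (since $\sigma + 2\tau \leq 3/2 < 2$), so the $d$ intervals above are pairwise disjoint on $\TT$ (including across the wrap-around). This gives $\Leb(T_\alpha) \geq d \cdot 2\tau/M \geq \tau/2$ whenever $d > M/4$. Integrating over the union of Farey intervals with denominators in $(M/4, M/2]$, whose total measure is $\gtrsim \sigma$ by the Farey-count estimate $\sum_{M/4 < d \leq M/2} \varphi(d) \asymp M^2$, produces the desired bound $\Leb(S \cap T) \gg \sigma \tau$.

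The principal obstacle is the calibration of the denominator range: $d$ must be large enough ($>M/4$) that each cluster contributes on the order of $\tau$, yet small enough ($\leq M/2$) that the intervals near different lattice points $j/d$ remain disjoint under the worst-case values of $\sigma,\tau$. Small-$M$ corner cases, where $(M/4, M/2]$ contains too few integers for the Farey asymptotic to apply, can be patched by direct verification (for example, $M=1$ yields $\Leb(S\cap T) = 4\sigma\tau$ immediately).
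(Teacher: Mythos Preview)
Your proof is correct and follows essentially the same idea as the paper's: on a Farey strip $S_{a/d}$, the $d$ pieces of $T$ coming from slopes $1,\dots,d$ are pairwise disjoint and each contributes area $(2\sigma/M^2)(2\tau/M)$, and summing $\sum d\varphi(d)$ over the relevant $d$ gives the bound. Your restriction to denominators $d\in(M/4,M/2]$ is a tidy device that makes the disjointness hold uniformly for all $\sigma,\tau\leq 1/2$ (the paper instead sums over all $d\leq M$, where the disjointness claim is slightly more delicate at the endpoint $\sigma=\tau=1/2$), and the slice formulation $\int_{S'}\Leb(T_\alpha)\,d\alpha$ is just another way of totalling the same parallelogram areas.
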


\begin{proof}
  Notice that $S$ consists of disjoint vertical strips in $\TT^2$ over
  the Farey fractions
  \begin{equation*}
    \calF_{M} := \set*{\frac{a}{d}\in [0,1]: (a,d)= 1}.
  \end{equation*}
  The strip $S_{a/d}$ over $a/d\in\calF$ has width
  $\frac{2\sigma}{M^2}$. Meanwhile, $T$ is a union of strips which
  wind around the torus, of slopes $1, \dots, M$ and with vertical
  cross-sections of length $2\tau/M$. Specifically, they are the
  supports of $\bone_{1,\tau}, \dots, \bone_{M,\tau}$. The condition
  $0< \sigma\leq 1/2$ guarantees that the indicators
  $\bone_{1,\tau}, \dots, \bone_{d,\tau}$, when restricted to the
  strip $S_{a/d}$, are mutually singular. They are supported on
  non-overlapping parallelograms contained in $S_{a/d}$, each of which
  has area
  $\parens*{\frac{2\sigma}{M^2}}\parens*{\frac{2\tau}{M}}$. Therefore,
  $\Leb(S_{a/d}\cap T) \geq d\frac{4\sigma\tau}{M^3}$. Summing over $a/d\in\calF_M$, we have
  \begin{equation}\label{eq:10}
    \Leb(S\cap T) \geq \sum_{d=1}^M \frac{4\sigma\tau}{M^3}d\varphi(d)
  \end{equation}
  where $\varphi(d)$ denotes the Euler totient function. The growth
  properties of $\varphi$ guarantee that $\sum_{d=1}^M d\varphi(d) \gg
  M^3$, and combining this with~\eqref{eq:10} proves the lemma.
\end{proof}

We now state the following.

\begin{proof}[Proof of Theorem~\ref{thm:div}]
  We adapt Bourgain's arguments
  from~\cite[Appendix]{Aistleitneretaladditiveenergy}. First, it is
  possible to modify them to show that if $\psi(N)\neq o(1)$, then no $\calA\subset\NN$ satisfying
  $E(A_N) \asymp N^3\psi(N)$ has DMPPC. We do this here in
  Theorem~\ref{thm:bourgain}. Therefore, let us assume that
  $\psi(N) = o(1)$.

  Next, notice that we may assume that $\psi(N)^{-1}\in\NN$ for every
  $N$, for example by replacing $\psi(N)^{-1}$ with
  $\floor{\psi(N)^{-1}}$.

  Let $\eps>0$ be a (small) constant, which we will specify later. Per
  Lemma~\ref{lem:deltas}, for each $N\in\NN$ let $\Delta_N$ be an
  integer large enough that the sets
  \begin{equation*}
    R_N = \set*{ \alpha\in\TT : \norm{d\Delta_N\alpha}\leq
      \frac{\psi(N)\eps}{N} \quad\textrm{for
        some}\quad 0 < d\leq
      N\eps}
  \end{equation*}
  are pairwise quasi-independent. For each $N$, set
  $S_N = R_N\times\TT\subset\TT^2$. Let
  \begin{equation*}
    T_N = \set*{(\alpha, \gamma)\in\TT^2 : \norm{d\Delta_N\alpha -
        \gamma} \leq \frac{1}{8N}\quad\textrm{for some}\quad 0 < d
      \leq \frac{N}{20 \psi(N)}}.
  \end{equation*}
  Observe that for all large $N$ the set $T_N$ contains the set
  $\Delta_N^{-1}T$, where $T$ is from Lemma~\ref{lem:farey}, with
  $\tau = \eps/8$, and that $S_N$ contains $\Delta_N^{-1}S$ with
  $\sigma = \psi(N)\eps^2$ (the role of $M$ is played by
  $N\eps$). That lemma tells us then that
  \begin{equation}\label{eq:6}
    \Leb(T_N\cap S_N) \gg \psi(N)\eps^3.
  \end{equation}
  Putting $U_N := T_N\cap S_N$, notice that we have for $M\neq N$
  \begin{align*}
    \Leb(U_M\cap U_N) &\leq \Leb(S_M\cap S_N) \\
                      &\leq 2 \Leb(S_M)\Leb(S_N) \\
                      &\leq 2 \parens*{2\psi(M)\eps^2}\parens*{2\psi(N)\eps^2} \\
    &\overset{\eqref{eq:6}}{\ll} \Leb(U_M)\Leb(U_N).
  \end{align*}
  The implicit constant in this last expression depends on $\eps$, but
  this is unimportant. What is important now is that $(U_N)_N$ is a
  sequence of subsets of $\TT^2$ which are pairwise quasi-independent,
  and have the property that
  \begin{equation*}
    \sum_{t=0}^\infty \Leb(U_{2^t}) \gg \sum_{t=0}^\infty \psi(2^t)
  \end{equation*}
  diverges. Therefore, $U_\infty :=\limsup_t U_{2^t}$ has full measure
  in $\TT^2$.

  We now construct an infinite set $\calA\subset\NN$ whose additive
  energy satisfies $E(A_N)\asymp N^3\psi(N)$, and such that for every
  $(\alpha, \gamma) \in U_\infty$, we have
  $\limsup_N F(\alpha, \gamma, 1, N, \calA) = \infty$. This will prove
  the theorem. 

  The set $\calA$ will consist of
  concatenated blocks
  $B_N$ of integers, each of which is a subset
  \begin{equation*}
   B_N \subset  \Delta_N\brackets*{(N\psi(N)^{-1}, 2N\psi(N)^{-1}]\cap\NN}.
 \end{equation*}
 In view of~\cite[Lemma~6]{Aistleitneretaladditiveenergy}, we may
 find, for each $N$, a block $B_N$ with the properties:
 \begin{enumerate}
 \item For all $d\in\ZZ\setminus\set{0}$ we have
   $r_{B_N}(\Delta_N d) \leq 2 N\psi(N)$.
 \item For all $d\in\ZZ\setminus\set{0}$ with
   $\abs{d} < \frac{N}{10\psi(N)}$ we have
   $r_{B_N}(\Delta_N d) \geq \frac{1}{2} N\psi(N)$.
  \item We have $N/2 \leq \# B_N \leq 2N$. 
  \end{enumerate}
  Using~\eqref{eq:9} and the first two properties above, we see that
  $E(B_N)\asymp N^3 \psi(N)$.

  Put $\calA = \set{B_1, B_2, B_4, \dots }$ as the concatenation of
  the blocks $B_{2^t}, t\geq 0$. Suppose that
  \begin{equation*}
\sum_{k=0}^{t-1} \# B_{2^k} < N \leq
  \sum_{k=0}^t \# B_{2^k},
\end{equation*}
that is, $A_N$ is a truncation of $\calA$ in the block
$B_{2^t}$. Clearly, we have $E(A_N) \geq E(B_{2^{t-1}})$, hence
  \begin{equation*}
    E(A_N) \gg (2^{t-1})^3\psi(2^{t-1}) \gg N^3\psi(N).
  \end{equation*}
  On the other hand, we may assume that the sequence $(\Delta_N)$ is
  sparse enough that 
  \begin{equation*}
    E(A_N) \leq \sum_{k=0}^{t} E(B_{2^k}),
  \end{equation*}
  that is, the only contributions to the additive energy come from
  four-tuples $(a,b,c,d)$ which lie in the same block. This leads to
  \begin{align}
    E(A_N) &\ll \sum_{k=0}^{t} (2^k)^3\psi(2^k) \nonumber \\
           &\ll (2^t)^3 \psi(2^t) \label{eq:13}\\
    &\ll N^3 \psi(N), \nonumber
  \end{align}
  where~\eqref{eq:13} follows from our assumption that
  $N^{3-\delta}\psi(N)$ is increasing.\footnote{This is the only place
    where that assumption is used.} Therefore, $E(A_N)\asymp
  N^3\psi(N)$, as needed.

  To estimate the pair correlations, note that for $N=2^t$, we have
\begin{align*}
  F(1, \#B_1+ \# B_2 + \# B_4+\dots+\# B_N, \calA) &\geq
                                                                  \frac{1}{4N}
                                                                  \sum_{d\neq
                                                                  0}
                                                                  r_{B_n}(\Delta_N
                                                                  d)\bone_{\Delta_N
                                                                  d,1/(4N)} \\
                                                                &\geq
                                                                  \frac{\psi(N)}{8}\sum_{0
                                                                  <
                                                                  \abs{d}\leq
                                                                  \frac{N}{10\psi(N)}}
                                                                  \bone_{\Delta_N
                                                                  d,1/(4N)}
  \\
                                                                &\geq
                                                                  \frac{\psi(N)}{8}\sum_{0
                                                                  <
                                                                  d\leq
                                                                  \frac{N}{10\psi(N)}}
                                                                  \bone_{\Delta_N
                                                                  d,1/(4N)}.
\end{align*}
Notice then that for any $\alpha\in U_N$, we will have 
\begin{equation*}
  F(\alpha, 1, \# B_1+\dots+\# B_N, \calA) \geq
  \frac{1}{160\eps}.
\end{equation*}
Since almost every $(\alpha,\gamma) \in\TT^2$ is contained in infinitely many
$U_{2^t}$'s, this implies that 
\begin{equation*}
\limsup_{N\to\infty} F(\alpha, \gamma, 1, N, \calA) \geq   \frac{1}{160
\eps}. 
\end{equation*}
for almost every $(\alpha,\gamma)\in\TT^2$.  If $0 < \eps < \frac{1}{320}$  then
this value exceeds $2$, therefore $\calA$ does not have
doubly metric Poissonian pair correlations and the theorem is proved.
\end{proof}

\section{Proofs of Theorems~\ref{thm:subsequenceofsubsequence} and~\ref{thm:subsequence}}

Theorem~\ref{thm:subsequenceofsubsequence} can be deduced from the
proof of Theorem~\ref{thm:fundyes}.

\begin{proof}[Proof of Theorem~\ref{thm:subsequenceofsubsequence}]
  By Lemma~\ref{lem:variance}, we have that
  \begin{equation*}
    \sigma^2(F(s,N)) \ll E(A)N^{-3} s.
  \end{equation*}
  Let $\set{N_t}\subset\NN$ be a sequence as in the theorem statement.
  By passing to a subsequence, we may assume that $\set{N_t}$
  increases fast enough that for any $s>0$, the sum
  $\sum_t \sigma^2(F(s,N_t))$ converges. Let $\eps,s>0$.  The first
  paragraph of the proof of Theorem~\ref{thm:fundyes}, with the sequence
  $\set{\floor{k^t}}$ replaced by the sequence $\set{N_t}$, shows that
  for almost every $(\alpha, \gamma)$,
  \begin{equation*}
    F(\alpha, \gamma, s, N_t, \calA) \to 2s
  \end{equation*}
  as $t\to \infty$. Running the argument for every rational $s>0$
  and intersecting the (countably many) full-probability subsets of
  phase space, we get that for almost every $(\alpha, \gamma)$,
  \begin{equation*}
    (\forall s\in\QQ_{>0})\qquad \lim_{t\to\infty} F(\alpha, \gamma, s,
    N_t, \calA) = 2s. 
  \end{equation*}
  Therefore,~\eqref{eq:dmppc} holds, as needed.
\end{proof}

Half of Theorem~\ref{thm:subsequence} is
Theorem~\ref{thm:subsequenceofsubsequence}. The other half follows
immediately from an inhomogeneous version of Bourgain's proof
in~\cite[Appendix]{Aistleitneretaladditiveenergy} that
$E(A) = \Omega(N^3)$ precludes MPPC. For completeness, we will carry
out the argument in the inhomogeneous setting, following an exposition
of Walker~\cite{WalkerSurvey}.

The basis of the argument is the Balog--Szem{\'e}redi--Gowers
Lemma. 

\begin{lemma}[{\cite[Section~2.5]{TaoVubook}}]\label{lem:bzg}
  Let $A\subset\ZZ$ be a finite set of integers. For any $c>0$ there
  exist $c_1, c_2>0$ depending only on $c$ such that the following
  holds. If $E(A)\geq c\#A^3$, then there is a subset $B\subset A$
  such that $\# B \geq c_1\#A$ and $\#\parens{B-B} \leq c_2\# A$.
\end{lemma}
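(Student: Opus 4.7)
The plan is to prove the Balog--Szem\'eredi--Gowers Lemma by its standard graph-theoretic route, as laid out in~\cite[Section~2.5]{TaoVubook}. Setting $r_A(x) = \#\set{(a, a') \in A^2 : a - a' = x}$, the hypothesis $E(A) \geq c(\#A)^3$ combined with $\sum_x r_A(x) = (\#A)^2$ and a dyadic pigeonhole on the values of $r_A$ produces a dyadic level $K \asymp c\#A$ and a set $P \subset A - A$ of \emph{popular} differences, each with $r_A(x) \asymp K$, satisfying $\sum_{x \in P} r_A(x) \gg c (\#A)^2$. I would then encode $P$ as the graph $G$ on vertex set $A$ where $a \sim a'$ iff $a - a' \in P$; the previous step guarantees that $G$ has $\gg c (\#A)^2$ edges, so $G$ is dense.

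The heart of the argument is then a purely graph-theoretic statement: any graph on $n$ vertices with at least $\delta n^2$ edges contains a subset $B$ of size $\gg \delta^{O(1)} n$ such that most pairs $(b, b') \in B^2$ are joined by $\gg \delta^{O(1)} n^2$ paths of length three. I would prove this by a random-sampling / double-counting argument (picking a random edge and taking $B$ to be the common neighbourhoods of its endpoints), followed by a second pigeonhole pass that discards a small portion of $B$ to guarantee that \emph{every} remaining pair is joined by many length-three paths.

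Finally I would translate this graph-theoretic richness of $B$ into a bound on $\#(B - B)$. For any pair $(b, b') \in B^2$, a length-three path $b \sim a_1 \sim a_2 \sim b'$ rewrites $b - b' = (b - a_1) - (a_2 - a_1) + (a_2 - b')$ as a signed sum of three popular differences. Since each element of $P$ has $\asymp K \asymp c\#A$ representations in $A - A$, while each $b - b'$ admits $\gg \delta^{O(1)} (\#A)^2$ such path-representations, a Cauchy--Schwarz counting argument bounds the number of distinct values of $b - b'$ by $O_c(\#A)$. The main obstacle, and the technical heart of the proof, is carrying out this final counting step while keeping careful track of the quantitative dependence on $c$ through the dyadic reduction, the graph lemma, and the length-three path representation; all of this is executed cleanly in~\cite[Section~2.5]{TaoVubook}, which is why we import the lemma as a black box.
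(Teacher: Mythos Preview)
The paper does not supply its own proof of this lemma; it is stated with the citation to~\cite[Section~2.5]{TaoVubook} and then used as a black box in the proof of Theorem~\ref{thm:bourgain}. Your proposal correctly recognises this at the end, and the sketch you give of the dyadic-pigeonhole/graph-theoretic/path-counting route is indeed the standard argument from that reference, so there is nothing to compare.
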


\begin{theorem}[{\cite[Appendix]{Aistleitneretaladditiveenergy}},
  doubly metric inhomogeneous version]\label{thm:bourgain}
  Suppose that $\set{N_t}\subset \NN$ is a sequence such that we have
  $E(A_{N_t}) \geq c N_t^3$ for some constant $c>0$ and all large
  $t$. Then $\calA$ does not have doubly metric Poissonian pair
  correlations along $\set{N_t}$.
\end{theorem}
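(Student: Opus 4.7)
The plan is to adapt Bourgain's argument from~\cite[Appendix]{Aistleitneretaladditiveenergy} (via Walker's exposition~\cite{WalkerSurvey}) to the doubly metric setting, using the pairwise independence of the indicators $\bone_{d,\eps}$ established in Lemma~\ref{lem:pairwise} in place of the one-variable estimates of the homogeneous version. First I would apply Lemma~\ref{lem:bzg} to each $A_{N_t}$ to produce a subset $B_t\subseteq A_{N_t}$ satisfying $\#B_t\geq c_1 N_t$ and $\#(B_t-B_t)\leq c_2 N_t$, where $c_1,c_2>0$ depend only on $c$. Write $D_t=(B_t-B_t)\setminus\set{0}$.

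Next I would isolate the populous differences. Since $\sum_{d\in D_t} r_{B_t}(d)=\#B_t(\#B_t-1)\geq \tfrac12 c_1^2 N_t^2$ for all large $t$ and each $r_{B_t}(d)\leq \#B_t\leq N_t$, a Markov-type pigeonhole with $\eta:=c_1^2/(4c_2)$ yields a set $H_t\subseteq D_t$ with $\#H_t\geq \eta N_t$ such that $r_{B_t}(d)\geq \eta N_t$ for every $d\in H_t$. Setting
\[
\tilde E_t:=\bigcup_{d\in H_t}\set*{(\alpha,\gamma)\in\TT^2 : \norm{d\alpha-\gamma}\leq s/N_t},
\]
every point of $\tilde E_t$ realizes at least one strip, so by~\eqref{eq:5},
\[
F(\alpha,\gamma,s,N_t,\calA)\geq \frac{r_{A_{N_t}}(d)}{N_t}\geq \frac{r_{B_t}(d)}{N_t}\geq \eta
\]
there. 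Choosing $s>0$ small enough that $2s<\eta$ creates a clean gap above the target value.

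From Lemma~\ref{lem:pairwise} I obtain the Bonferroni lower bound
\[
\Leb(\tilde E_t)\geq \#H_t\cdot\frac{2s}{N_t}-\binom{\#H_t}{2}\parens*{\frac{2s}{N_t}}^2\gg s
\]
uniformly in $t$, and, for $t\neq t'$, the cross estimate
\[
\Leb(\tilde E_t\cap\tilde E_{t'})\leq \sum_{\substack{d\in H_t,\, d'\in H_{t'}\\ d\neq d'}}\frac{(2s)^2}{N_t N_{t'}}+\sum_{d\in H_t\cap H_{t'}}\frac{2s}{\max(N_t,N_{t'})},
\]
whose first sum is a bounded multiple of $\Leb(\tilde E_t)\Leb(\tilde E_{t'})$. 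Passing to a subsequence $\set{N_{t_k}}$ that grows fast enough in $k$ (with rate depending on $s,c_1,c_2$) makes the diagonal contribution negligible, yielding pairwise quasi-independence of $\set{\tilde E_{t_k}}$. Since $\sum_k\Leb(\tilde E_{t_k})=\infty$, the divergent Borel--Cantelli lemma gives $\Leb(\limsup_k\tilde E_{t_k})>0$, and on that limsup set $\limsup_{t\to\infty}F(\alpha,\gamma,s,N_t,\calA)\geq \eta>2s$, so $\calA$ cannot have DMPPC along $\set{N_t}$.

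The main obstacle is the quasi-independence step: each $\tilde E_t$ is a union of $\asymp N_t$ strips, so intersecting two such sets produces an off-diagonal part controlled by Lemma~\ref{lem:pairwise} together with a diagonal part from common differences $d\in H_t\cap H_{t'}$, and it is that diagonal part that forces the sparsification of $\set{N_{t_k}}$. Once the sparsification is in place, the Borel--Cantelli conclusion is standard.
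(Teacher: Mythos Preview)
Your argument is correct, but it takes a noticeably more elaborate route than the paper's. The paper also starts from Lemma~\ref{lem:bzg}, but then proceeds by a \emph{first-moment} argument on the \emph{complement} of the union of strips over $B_{N_t}-B_{N_t}$: on $\TT^2\setminus\Omega_t$ only pairs outside $B_{N_t}\times B_{N_t}$ contribute, so $\int_{\TT^2\setminus\Omega_t} F\leq 2s(1-c_1^2)$, and a single Markov step produces a set $\Gamma_t$ of measure at least $c_1^2/4$ on which $F(\cdot,\cdot,s,N_t)\leq 2s(1-c_1^2/4)$. Since $\Leb(\Gamma_t)$ is bounded below uniformly in $t$, the set $\limsup_t\Gamma_t$ automatically has positive measure---no Borel--Cantelli, no quasi-independence, and no sparsification are needed---and on it $\liminf_t F<2s$.

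By contrast, you aim for the opposite inequality ($\limsup_t F>2s$) by identifying a rich set $H_t$ of popular differences, bounding $\Leb(\tilde E_t)$ from below via Bonferroni and Lemma~\ref{lem:pairwise}, and then \emph{thinning} to a subsequence $\set{N_{t_k}}$ so that the diagonal overlaps $H_{t_k}\cap H_{t_{k'}}$ become negligible and the divergent Borel--Cantelli lemma applies. This works (failure of convergence along a subsequence certainly precludes DMPPC along $\set{N_t}$), but the extra machinery is not needed here: the paper's route avoids Lemma~\ref{lem:pairwise} entirely and handles the original sequence directly. One small wrinkle in your pigeonhole step is that it actually yields $\#H_t\geq (c_1^2/4)N_t$ rather than $\eta N_t$ with $\eta=c_1^2/(4c_2)$; this is harmless once one assumes $c_2\geq 1$, which is always permissible since $c_2$ is an upper-bound constant.
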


\begin{proof}
  For each $t\in \NN$ large enough, let $B_{N_t}$ be the subset of
  $A_{N_t}$ which is guaranteed by Lemma~\ref{lem:bzg}.

  Let $s>0$ be a fixed real number, to be specified. Let
  \begin{equation*}
    \Omega_t := \set*{(\alpha, \gamma)\in \TT^2 : \norm{\alpha n - \gamma} \leq \frac{s}{N_t}\textrm{ for some } n\in B_{N_t} - B_{N_t}},
  \end{equation*}
  and notice that
  \begin{equation*}
    \Leb\parens*{\Omega_t} \leq \frac{2s}{N_t} \#\parens*{B_{N_t} - B_{N_t}} \overset{\textrm{Lem.~\ref{lem:bzg}}}{\leq} 2s c_2.
  \end{equation*}
  Notice also that for every $(\alpha, \gamma)\in\TT^2\setminus\Omega_t$ we have
  \begin{equation*}
    F(\alpha, \gamma, s, N_t) = \frac{1}{N_t} \sum_{(a,b) \in A_{N_t}\times A_{N_t} \setminus B_{N_t}\times B_{N_t} } \bone_{\brackets*{0, s/N_t}} \parens*{\norm{\alpha(a-b) - \gamma}}, 
  \end{equation*}
  hence 
  \begin{align*}
    \int_{\TT^2\setminus \Omega_t} F(\alpha, \gamma, s, N_t)\,d\alpha\,d \gamma 
    &= \frac{1}{N_t}\sum_{(a,b) \in A_{N_t}\times A_{N_t} \setminus B_{N_t}\times B_{N_t} } \int_{\TT^2\setminus \Omega_t} \bone_{\brackets*{0, s/N_t}} \parens*{\norm{\alpha(a-b) - \gamma}}\\
    &\leq \frac{2s}{N_t^2}\#\parens*{A_{N_t}\times A_{N_t} \setminus B_{N_t}\times B_{N_t}} \\
    &\overset{\textrm{Lem.~\ref{lem:bzg}}}{\leq} 2s (1 - c_1^2). 
  \end{align*}
  Now, suppose that the set of
  $(\alpha, \gamma)\in \TT^2\setminus \Omega_t$ for which
  $F(\alpha, \gamma, s, N_t) \leq 2s (1 - c_1^2/4)$ has measure less
  than $c_1^2/4$. Then we would have 
  \begin{equation*}
    2s\parens*{1-c_1^2} \geq \int_{\TT^2\setminus\Omega_t} F(\alpha, \gamma, s, N_t)\,d\alpha\,d\gamma > 2s \parens*{1 - \frac{c_1^2}{4}}\parens*{1 - 2sc_2 - \frac{c_1^2}{4}}.
  \end{equation*}
  But for small enough $s>0$, this cannot possibly hold. Let us now
  specify $s>0$ to be small enough. Then there is a set
  $\Gamma_t \subset \TT^2\setminus\Omega_t$ with
  $\Leb(\Gamma_t) \geq c_1^2/4$ and such that
  $F(\alpha, \gamma, s, N_t) \leq 2s(1-c_1^2/4)$ holds for all
  $(\alpha, \gamma)\in\Gamma_t$. Let
  $\Gamma :=\limsup_{t\to\infty}\Gamma_t$, and notice that we must
  have $\Leb(\Gamma) > 0$. Since for any
  $(\alpha, \gamma)\in\Gamma$, we have that
  \begin{equation*}
    \liminf_{t\to\infty} F (\alpha, \gamma, s, N_t) \leq 2s \parens*{1 - \frac{c_1^2}{4}} < 2s, 
  \end{equation*}
  the theorem is proved.
\end{proof}

\begin{proof}[Proof of Theorem~\ref{thm:subsequence}]
  For the ``if'' direction note that if
  $\liminf N^{-3}E(A)=0$, then there is a subsequence
  $\set{N_t}\subset\NN$ to which
  Theorem~\ref{thm:subsequenceofsubsequence} applies.

  For the ``only if'' direction, suppose that
  $\liminf N^{-3}E(A)>0$, and let $\set{N_t}\subset\NN$ be any
  subsequence. Then there is some constant $c>0$ such that
  $E(A_{N_t}) \geq c N_t^{3}$ holds for all large $t$, and
  Theorem~\ref{thm:bourgain} implies that $\calA$ does not have DMPPC
  along $\set{N_t}$.
\end{proof}

\section{Proof of Theorem~\ref{eq:uid}}

The proof of Theorem~\ref{eq:uid} is adapted from~\cite{HKLSUPPC}. We
will use the following simple lemma stating that if a sequence does
not equidistribute in $\TT$, then there are arbitrarily small
intervals in $\TT$ which are ``overrepresented'' infinitely often.

\begin{lemma}\label{lem:proof-theor-refeq-3}
  Suppose $(x_n)_n$ is a sequence of points in $\TT$ which does not
  equidistribute. Then there are arbitrarily small intervals
  $I\subset\TT$ such that
  \begin{equation}\label{eq:2}
    \limsup_{N\to\infty}\frac{1}{N}\#\set*{n\in[N]\mid x_n\in
      I}>\Leb(I).
  \end{equation}
\end{lemma}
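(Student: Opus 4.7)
The plan is to argue at the level of the empirical distribution of $(x_n)_n$. Set $\mu_N := \frac{1}{N}\sum_{n\leq N}\delta_{x_n}$. The hypothesis that $(x_n)$ is not equidistributed is precisely the statement that $\mu_N$ does not converge weakly to Lebesgue measure $\Leb$ on $\TT$. Since $\TT$ is compact, the space of Borel probability measures on $\TT$ is weak-$*$ compact (Prokhorov/Banach--Alaoglu), so some subsequence $\mu_{N_t}$ converges weakly to a probability measure $\mu$, and the limit must satisfy $\mu\neq\Leb$ (otherwise, combined with the fact that $\mu_N$ already fails to converge to $\Leb$, we would extract a further contradictory subsequence).

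Next I would locate, inside this limit measure, arbitrarily small intervals on which $\mu$ strictly exceeds $\Leb$. Because $\mu\neq\Leb$ and intervals generate the Borel $\sigma$-algebra on $\TT$, there must exist some interval $J\subset\TT$ with $\mu(J)>\Leb(J)$: if instead $\mu(I)\leq\Leb(I)$ held for every interval $I$, then writing $\TT=I\sqcup I^c$ would force $\mu(I)=\Leb(I)$ on every $I$, contradicting $\mu\neq\Leb$. Given any $\delta>0$, I then partition $J$ into finitely many sub-intervals $J_1,\dots,J_k$ of length less than $\delta/2$. Since $\sum_i\mu(J_i)=\mu(J)>\Leb(J)=\sum_i\Leb(J_i)$, at least one $J_i$ satisfies $\mu(J_i)>\Leb(J_i)$, and a tiny open enlargement $I\supset J_i$ of length still below $\delta$ preserves the strict inequality (using that $\mu$ has only countably many atoms, one may choose the enlargement to add negligibly little $\mu$-mass while adding arbitrarily little Lebesgue length).

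Finally, I would invoke the Portmanteau theorem. For the open interval $I$ produced above, weak convergence $\mu_{N_t}\to\mu$ gives $\liminf_t \mu_{N_t}(I)\geq\mu(I)$, so
\begin{equation*}
  \limsup_{N\to\infty}\mu_N(I) \;\geq\; \liminf_{t\to\infty}\mu_{N_t}(I) \;\geq\; \mu(I) \;>\; \Leb(I),
\end{equation*}
which is exactly the inequality \eqref{eq:2}. Since $\delta>0$ was arbitrary, the intervals produced may be chosen arbitrarily small, completing the proof.

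The whole argument is light on obstacles; the one step demanding a little care is the passage from the closed interval $J_i$ on which $\mu>\Leb$ to an \emph{open} interval $I$ enjoying the same strict inequality, so that the open-set half of Portmanteau applies. This is routine, and I expect no serious difficulty in the write-up.
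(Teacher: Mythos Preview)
Your argument is correct and takes a genuinely different route from the paper's proof. The paper proceeds by a direct elementary argument: starting from an arbitrarily small interval $I$ on which $A_N(I):=\frac{1}{N}\#\{n\leq N: x_n\in I\}$ fails to converge to $\Leb(I)$, if the desired overrepresentation inequality does not already hold for $I$, one deduces $\liminf_N A_N(I)<\Leb(I)$, partitions the complement $\TT\setminus I$ into $K$ equal-length arcs, and uses a counting/pigeonhole argument to locate one of these arcs $I_{\bar k}$ satisfying $\limsup_N A_N(I_{\bar k})>\Leb(I_{\bar k})$. Your approach instead passes through a weak-$*$ subsequential limit $\mu\neq\Leb$ of the empirical measures, locates arbitrarily short intervals on which $\mu$ strictly dominates $\Leb$, and then invokes the open-set half of the Portmanteau theorem. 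Your route is more conceptual and would port readily to other compact metric spaces, at the cost of importing standard measure-theoretic machinery; the paper's argument is entirely elementary and self-contained. One incidental simplification for your write-up: the enlargement step needs no atom bookkeeping, since for any interval $J_i$ with $\mu(J_i)>\Leb(J_i)$ and any open $I\supset J_i$ with $\Leb(I)<\mu(J_i)$ one automatically has $\mu(I)\geq\mu(J_i)>\Leb(I)$.
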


\begin{proof}
  For an interval $I\subset\TT$ and integer $N\in\NN$, let
  \begin{equation*}
    A_N(I) :=\frac{1}{N}\#\set*{n\in[N]\mid x_n\in I}.
  \end{equation*}
  That $(x_n)$ does not equidistribute implies that there are
  arbitrarily short intervals $I \subset\TT$ such that
  $\lim_{N\to\infty} A_N(I)\neq \Leb(I)$.  Let $I$ be such an
  interval, as small as desired. If~\eqref{eq:2} holds then we are
  done, so let us assume that
  $\limsup_{N\to\infty} A_N(I)\leq \Leb(I)$.  Therefore, it must be
  that $\liminf_{N\to\infty}A_N(I) < \Leb(I)$, which implies that
  there is some $\eps>0$ and infinitely many values of $N\in\NN$ for
  which $A_N(I)\leq \Leb(I) (1-\eps)$.

  Now, let $\bigcup_{k=0}^K I_k$ be a partition of $\TT$ by intervals
  where $I_0=I$ and $\Leb(I_k) = (1-\Leb (I))/K$ for each
  $k=1, \dots, K$. Notice that for all $N$ we have
  $\sum_{k=0}^K A_N(I_k) = 1$, so for the infinitely many $N$ for
  which $A_N(I)\leq \Leb(I) (1-\eps)$ holds we have
  \begin{equation*}
    \Leb(I)(1-\eps) + \sum_{k=1}^K A_N(I_k) \geq 1.
  \end{equation*}
  This implies that there must be some $\hat k := \hat k(N)\in [K]$
  for which
  \begin{align*}
    A_N(I_{\hat k}) &\geq \frac{1 - \Leb(I)(1-\eps)}{K} \\
                    &= \Leb(I_{\hat k}) \parens*{\frac{1 -
                      \Leb(I)(1-\eps)}{1 -\Leb(I)}}.
  \end{align*}
  By the pigeonhole principle, there is some $\bar k$ such that
  $\bar k = \hat k(N)$ for infinitely many $N$. For this $\bar k$, we
  have $\limsup_{N\to\infty}A_N(I_{\bar k}) > \Leb(I_{\bar k})$.
\end{proof}

\begin{proof}[Proof of Theorem~\ref{eq:uid}]
  Assume that the sequence $\mathbf x = (x_n)_n$ is not
  equidistributed in $\TT$. In view of the results
  of~\cite{AistleitnerLachmannPausinger,GrepstadLarcher,HKLSUPPC,Marklof2019},
  it follows that $\mathbf x$ does not have $\gamma$-PPC when
  $\gamma=0$. So let us fix arbitrarily $\gamma > 0$.

  The fact that the sequence is not equidistributed implies, by
  Lemma~\ref{lem:proof-theor-refeq-3}, that there is some arc in $\TT$
  which is ``underrepresented'' for infinitely many partial sequences
  $(x_n)_{n=1}^N$. Furthermore, that arc can be supposed to be as long
  as we like. In particular, since the properties of equidistribution
  and $\gamma$-PPC are not altered by rotating the entire set $\calA$,
  this proof will not lose any generality if we assume that there
  exist $\alpha, \beta>0$ such that $1- \alpha < \norm{\gamma}$ and
  such that for infinitely many $N\in \NN$ we have
  \begin{equation*}
    \frac{1}{N}\#\set*{1 \leq n \leq N \mid x_n \in [0,\alpha)} \leq \beta < \alpha. 
  \end{equation*}
  Let $N$ be one such (large) integer. For $i=0, \dots, N-1$, let
  \begin{equation*}
    X_i := \#\set*{1\leq n \leq N \mid x_n \in \Bigg\lbrack\frac{i}{N}, \frac{i+1}{N}\Bigg\rparen}
  \end{equation*}
  so that we have
  \begin{equation}\label{eq:constraints}
    \sum_{i=0}^{\floor{N\alpha}-1} X_i \leq N\beta \quad\textrm{while}\quad\sum_{i=0}^{N-1} X_i =  N.
  \end{equation}
  Now notice that for $s\in\NN$, 
  \begin{equation*}
    N F(\gamma, s,N, \mathbf x) \leq \sum_{i=0}^{N-1} \sum_{j  = -s}^s  X_i X_{\brackets*{i + \floor{\gamma N} + j \pmod N}}.
  \end{equation*}
  The right-hand side defines a quadratic form in the variables
  $X_0, \dots, X_{N-1}$ which, subject to the constraints imposed
  by~(\ref{eq:constraints}), reaches its maximum when
  \begin{equation}\label{eq:plugin}
    X_0 = X_1 = \dots = X_{\floor{N\alpha}-1} = \frac{N\beta}{\floor{N\alpha}} \quad\textrm{and}\quad X_{\floor{N\alpha}} = \dots = X_{N-1} = \frac{N(1-\beta)}{N-\floor{N\alpha}}.
  \end{equation}
  Now with a fixed $s\in\NN$ and $N$ large enough, we will have
  $1-\alpha < \norm{\gamma} - s/N$, hence, 
  \begin{align*}
    N F(\gamma, s,N, \mathbf x) &\leq \sum_{i=0}^{N-1} \sum_{j  = -s}^{s}  X_i X_{i + \floor{\gamma N} + j} \\
             &=\sum_{i=0}^{\floor{N\alpha}-1} X_i \sum_{j  = -s}^s X_{i + \floor{\gamma N} + j}  + \sum_{i=\floor{N\alpha}}^{N-1} X_i \sum_{j  = -s}^s X_{i + \floor{\gamma N} + j}  \\
             &\overset{\textrm{(\ref{eq:plugin})}}{\leq} \parens*{\frac{N\beta}{\floor{N\alpha}}}\parens*{\frac{N(1-\beta)}{N-\floor{N\alpha}}} \sum_{i=\floor{N\alpha}}^{N-1}\sum_{j=-s}^{s}2 + \parens*{\frac{N\beta}{\floor{N\alpha}}}^2\parens*{N(2s+1)- \sum_{i=\floor{N\alpha}}^{N-1}\sum_{j=-s}^s2} \\
             &\sim (2s+1)N\underbrace{\brackets*{\frac{\beta}{\alpha}\parens*{2-\frac{\beta}{\alpha}}}}_{<1}.
  \end{align*}
  We see that there exists $\theta<1$ depending only on $\alpha$ and
  $\beta$ with the property that for any $s\in\NN$ and infinitely many
  $N\in\NN$, we have $F(\gamma, s,N, \mathbf x) \leq (2s+1)\theta$.
  Therefore, if $s$ is large enough, it is impossible that
  $F(\gamma, s, N, \mathbf x) \to 2s$ as $N\to\infty$, so $\mathbf x$
  does not have $\gamma$-PPC.
\end{proof}

\subsection*{Acknowledgments}
\label{sec:acknowledgments}

I thank Christoph Aistleitner for an illuminating correspondence.

\bibliographystyle{plain}


\end{document}